\newtheorem{theorem}{Theorem}
\newtheorem*{theorem*}{Theorem}
\newtheorem*{definition*}{Definition}
\newtheorem*{question*}{Question}
\newtheorem*{conjecture*}{Conjecture}
\newtheorem{corollary}[theorem]{Corollary}
\newtheorem*{corollary*}{Corollary}
\newtheorem*{remark*}{Remark}
\newtheorem{proposition}[theorem]{Proposition}
\newtheorem*{proposition*}{Proposition}
\newtheorem{lemma}[theorem]{Lemma}
\newtheorem*{lemma*}{Lemma}
\newtheorem*{example*}{Example}
\newtheorem{fact}[theorem]{Fact}
\newtheorem*{fact*}{Fact}
\newtheorem*{claim*}{Claim}
\numberwithin{theorem}{section}
\numberwithin{equation}{section}
\theoremstyle{remark}
\newtheorem{claim}{Claim}
\DeclareMathOperator{\an}{an}
\DeclareMathOperator{\anexp}{an,exp}
\DeclareMathOperator{\dcl}{dcl}
\DeclareMathOperator{\Gr}{Graph}
\DeclareMathOperator{\intr}{int}
\DeclareMathOperator{\rk}{rk}
\DeclareMathOperator{\supp}{supp}
\DeclareMathOperator{\Th}{Th}
\DeclareMathOperator{\tp}{tp}
\newcommand{\N}{\mathbb{N}}
\newcommand{\Q}{\mathbb{Q}}
\newcommand{\R}{\mathbb{R}}
\newcommand{\T}{\mathbb{T}}
\newcommand{\Z}{\mathbb{Z}}
\newcommand{\cA}{\mathcal A}
\newcommand{\cL}{\mathcal L}
\newcommand{\cO}{\mathcal O}
\newcommand{\cP}{\mathcal P}
\newcommand{\cR}{\mathcal R}
\newcommand{\cS}{\mathcal S}
\newcommand{\cU}{\mathcal U}
\newcommand{\mfM}{\mathfrak{M}}
\newcommand{\fm}{\mathfrak{m}}
\newcommand{\fn}{\mathfrak{n}}
\newcommand{\bk}{\boldsymbol{k}}
\newcommand{\inv}{^{-1}}
\newcommand{\mycomment}[1]{}
\newcommand{\TO}{T_{\cO}}
\newcommand{\LO}{\cL_{\cO}}
\newcommand{\TM}{T_{\mfM}}
\newcommand{\LM}{\cL_{\mfM}}
\newcommand{\Ls}{\cL_s}
\newcommand{\Lgks}{\cL_{\Gamma,\bk,s}}
\newcommand{\dclL}{\dcl_\cL}
\newcommand{\rkL}{\rk_\cL}
\renewcommand{\preceq}{\preccurlyeq}
\renewcommand{\geq}{\geqslant}
\renewcommand{\leq}{\leqslant}
\renewcommand{\phi}{\varphi}
\renewcommand{\epsilon}{\varepsilon}
\renewcommand{\k}{{\boldsymbol{k}}}
\DeclareFontFamily{OMS}{smallo}{}
\DeclareFontShape{OMS}{smallo}{m}{n}{<->s*[.65]cmsy10}{}
\DeclareSymbolFont{smallo@m}{OMS}{smallo}{m}{n}
\DeclareMathSymbol{\smallo}{\mathord}{smallo@m}{79}
\author{Elliot Kaplan}
\author{Christoph Kesting}
\email{kaplae2@mcmaster.ca}
\email{kestingc@mcmaster.ca}
\title{A dichotomy for $T$-convex fields with a monomial group}
\subjclass[2020]{Primary 03C64. Secondary 03C10, 12J10}
\address{Department of Mathematics and Statistics, McMaster University, Hamilton, Ontario, Canada}
\date{\today}
\begin{document}
\maketitle
\begin{abstract}
We prove a dichotomy for o-minimal fields $\cR$, expanded by a $T$-convex valuation ring (where $T$ is the theory of $\cR$) and a compatible monomial group. We show that if $T$ is power bounded, then this expansion of $\cR$ is model complete (assuming that $T$ is), it has a distal theory, and the definable sets are geometrically tame. On the other hand, if $\cR$ defines an exponential function, then the natural numbers are externally definable in our expansion, precluding any sort of model-theoretic tameness.
\end{abstract}
\section*{Introduction}
Let $T$ be a complete o-minimal theory extending the theory of real closed ordered fields in an appropriate language $\cL$. Let $\cR$ be a model of $T$. In~\cite{DL95}, van den Dries and Lewenberg studied the expansion of $\cR$ by a proper \textbf{$T$-convex subring}:\ a convex subset $\cO \subseteq \cR$ which is closed under all $\cL(\emptyset)$-definable continuous functions $f\colon \cR\to \cR$. They showed that all such expansions have the same elementary theory, denoted $\TO$, in the language $\LO \coloneqq \cL\cup\{\cO\}$, and that this theory eliminates quantifiers (after extending $\cL$ so that $T$ has quantifier elimination and a universal axiomatization). It follows that $\TO$ is weakly o-minimal.

In this paper, we study models $(\cR,\cO)\models \TO$ which are further expanded by a \textbf{monomial group}:\ a multiplicative subgroup $\mfM \subseteq \cR^>$ which is mapped isomorphically onto the value group $\cR^\times/\cO^\times$. Consider the following examples:
\begin{enumerate}
\item Let $\R_{\an}$ be the expansion of the real field $\R$ by functions which are real analytic on a neighborhood of the box $[-1,1]^n$, restricted to this box. Let $T_{\an} \coloneqq \Th(\R_{\an})$, in the language extending the language of ordered rings by these function symbols. This theory is o-minimal and model complete~\cite{Ga68,vdD86}. The field of Puiseux series $\R(\!(t^{1/\infty})\!)\coloneqq \bigcup_n\R(\!(t^{1/n})\!)$ admits an expansion to a model of $T_{\an}$, where each restricted analytic function on $\R$ is extended to the corresponding box in $\R(\!(t^{1/\infty})\!)$ via Taylor series expansion. The convex hull of $\R$ in $\R(\!(t^{1/\infty})\!)$, consisting of all series in which only non-negative exponents of $t$ appear, is $T_{\an}$-convex, and the subgroup $t^\Q = \{t^q:q \in \Q\}\subseteq \R(\!(t^{1/\infty})\!)^>$ is a monomial group.
\item Let $\R_{\anexp}$ further expand $\R_{\an}$ by the unrestricted exponential function and let $T_{\anexp}\coloneqq \Th(\R_{\anexp})$. Again, this theory is o-minimal and model complete~\cite{DM94}. The field $\T$ of logarithmic-exponential transseries admits an expansion to a model of $T_{\anexp}$; see~\cite[Corollary 2.8]{DMM97}. This field is essentially obtained from the field of Puiseux series over $\R$ by ``closing off'' under exponentials and logarithms. Once again, the convex hull of $\R$ is $T_{\anexp}$-convex, and the subgroup of \emph{transmonomials} (transseries obtained by exponentiating the purely infinite elements of $\T$) is a monomial group.
\end{enumerate}
In this paper, we show that $\R(\!(t^{1/\infty})\!)$, as a model of $T_{\an}$ expanded by predicates for the convex hull of $\R$ and the monomial group $t^\Q$, is still model complete. Additionally, this structure admits quantifier elimination in a slightly extended language and it is distal. While this structure is no longer weakly o-minimal (the subgroup $t^\Q$ is discrete), all definable unary subsets in $\R(\!(t^{1/\infty})\!)$ are the union of an open set and finitely many discrete sets. In contrast, the field $\T$, as a model of $T_{\anexp}$ expanded by a predicate for the group of transmonomials, was shown to be highly untame by Camacho~\cite[Theorem 4.11]{Ca18}. Explicitly, this structure defines the natural numbers $\N$ and is therefore at least as complex as Peano arithmetic.

As it turns out, the precise dividing line in our setting is whether the theory $T$ defines an exponential function. If $T$ does not define an exponential, then any definable function in any model of $T$ is eventually bounded by a power function by Miller's dichotomy~\cite{Mi96}. We can then use results of van den Dries~\cite{vdD97} and Tyne~\cite{Ty03} to prove a quantifier elimination result, thereby showing that the tameness properties enjoyed by the Puiseux series hold for any model of $T$ with a monomial group (assuming that the monomial group is compatible with the power functions). If $T$ does define an exponential, then $\N$ is externally definable in any model of $T$ with a monomial group that is compatible with the exponential.

We define exactly what we mean by a ``compatible'' monomial group in Section~\ref{sec:prelim}, where we also provide the necessary background on power boundedness and $T$-convex subrings. Our quantifier elimination result for power bounded $T$ is established in Section~\ref{sec:qe}, and we use this result to show that the value group and residue field are still stably embedded, even after adding a monomial group (stable embeddedness without the monomial group was shown by van den Dries~\cite{vdD97}). In Section~\ref{sec:d-min}, we use our quantifier elimination result to show that the unary definable sets in these expansions are unions of an open set and finitely many discrete sets, and in Section~\ref{sec:distal} we show that the theory of these expansions is distal. We turn our attention to exponential $T$ in Section~\ref{sec:exponential}, where we show that the natural numbers are externally definable in any model of $T$ expanded by a monomial group. 

\subsection*{Acknowledgements}
Research for this paper was conducted in part at the Fields Institute for Research in Mathematical Sciences. The first author was supported by the National Science Foundation under Award No.\ 2103240.

\section{Preliminaries}\label{sec:prelim}
\subsection*{Notation and conventions}
We always use $k$, $m$, and $n$ to denote elements of $\N =\{0,1,2,\ldots\}$. If $S$ is a totally ordered set, then by a \textbf{cut} in $S$, we mean a downward closed subset of $S$. If $A$ is a cut in $S$ and $y$ is an element in an ordered set extending $S$, then we say that \textbf{$y$ realizes the cut $A$} if $A < y < S\setminus A$. For an arbitrary subset $A \subseteq S$, we let $A^\downarrow$ denote the downward closure of $A$, so $A^\downarrow = \{y \in S: y \leq a \text{ for some }a \in A\}$ is a cut in $S$. Given an ordered abelian group $\Gamma$, we let $\Gamma^>$ denote the set $\{\gamma \in \Gamma:\gamma>0\}$. Given a ring $R$, we let $R^\times$ denote the multiplicative group of units in $R$.

\subsection*{O-minimality}
Throughout, $\cL$ is a language extending the language $\{0,1,<,+,-,\cdot\}$ of ordered rings, and $T$ is a complete o-minimal theory extending the theory of real closed ordered fields. It is well-known that $T$ has definable Skolem functions, and consequently, we may arrange that $T$ has quantifier elimination and a universal axiomatization just by extending $\cL$ by function symbols for all $\cL(\emptyset)$-definable functions. Let $\cR \models T$. Then for $A \subseteq \cR$, we have that $\dclL(A)$ (the $\cL$-definable closure of $A$), is an elementary substructure of $\cR$ (again, as a consequence of definable Skolem functions). It follows that $T$ has a \textbf{prime model} $\cP$, which admits a unique embedding into any other model of $T$ with image $\dclL(\emptyset)$. Given an elementary extension $\cS$ of $\cR$ and a subset $A \subseteq \cS$, we denote by $\cR\langle A\rangle$ the intermediate extension $\dclL(\cR \cup A)\subseteq \cS$. When $A$ is just a singleton $\{a\}$, we write $\cR\langle a \rangle$ for this extension. The definable closure $\dclL$ is a pregeometry, and we define $\rkL(\cS|\cR)$ to be the cardinality of a $\dclL$-basis for $\cS$ over $\cR$ (that is, a subset $A\subseteq \cS$ which is $\dclL$-independent over $\cR$ such that $\cS = \cR\langle A \rangle$). If $\rkL(\cS|\cR) = 1$, then $\cS$ is said to be a \textbf{simple $T$-extension} of $\cR$, and $\cS = \cR\langle a \rangle$ for some $a \in \cS\setminus \cR$.

A \textbf{power function} is an $\cL(\cR)$-definable endomorphism of the ordered multiplicative group $\cR^>$. Each power function $f$ can be thought of as the function $x\mapsto x^\lambda$, where $\lambda\coloneqq f'(1) \in \cR$. The collection $\Lambda$ of all such $\lambda$ is a subfield of $\cR$, called the \textbf{field of exponents of $\cR$}. By Miller's dichotomy~\cite{Mi96}, either $\cR$ is \textbf{power bounded} (every definable function is eventually bounded by a power function) or $\cR$ defines an \textbf{exponential function} (an ordered group isomorphism $\exp\colon\cR\to \cR^>$ which is equal to its own derivative). If $\cR$ is power bounded, then every power function is $\cL(\emptyset)$-definable, and every other model of $T$ is also power bounded with the same field of exponents as $\cR$ (we just say \textbf{$T$ is power bounded}, and we call $\Lambda$ the \textbf{field of exponents of $T$}); see~\cite[Propositions 4.2 and 4.3]{Mi96}.
If $\cR$ defines an exponential function $\exp$, then $\exp$ is $\cL(\emptyset)$-definable; see the beginning of Section 2 in~\cite{Mi96}.

\subsection*{\texorpdfstring{$T$}{T}-convex subrings}
As stated in the introduction, a \textbf{$T$-convex subring} of $\cR$ is a convex subset of $\cR$ which is closed under all $\cL(\emptyset)$-definable continuous functions $f\colon \cR\to \cR$. Let $\LO\coloneqq \cL\cup\{\cO\}$ and let $\TO$ be the $\LO$-theory which extends $T$ by axioms stating that $\cO$ is a proper $T$-convex subring. Let $(\cR,\cO)\models\TO$. Then $\cO$ is a convex subring of $\cR$ (hence a valuation ring), and $(\cR,\cO)$ is a convexly valued ordered field. As each element of the prime model $\cP$ is $\cL(\emptyset)$-definable, we always have $\cP \subseteq \cO$. We let $\Gamma\coloneqq \cR^\times/\cO^\times$ denote the \textbf{value group} of $(\cR,\cO)$, written additively, and we let $v\colon \cR^\times \to \Gamma$ denote the surjective valuation map. If $T$ is power bounded with field of exponents $\Lambda$, then $\Gamma$ has the structure of an ordered $\Lambda$-vector space, where $\lambda\cdot v(a)\coloneqq v(a^\lambda)$ for $\lambda \in \Lambda$ and $a \in \cR^\times$. In fact, $\Gamma$ is stably embedded as an ordered $\Lambda$-vector space~\cite[Theorem 4.4]{vdD97}. We write $\smallo$ for the unique maximal ideal of $\cO$, and we let $\k\coloneqq \cO/\smallo$ denote the \textbf{residue field} of $(\cR,\cO)$. We let $\pi\colon \cO \to \k$ be the corresponding residue map; then $\pi$ is order-preserving as $\cO$ is convex. The residue field $\k$, considered with its induced structure, is a model of $T$; see~\cite[Remark 2.16]{DL95} and~\cite[Remark 2.3]{Yi17}. Moreover, $\k$ is stably embedded as a model of $T$~\cite[Corollary 1.13]{vdD97}. Accordingly, we always construe $\k$ as an $\cL$-structure.
We sometimes include the subscript $\cR$ on $\cO$, $\bk$, and $\Gamma$ when confusion may otherwise arise. Let $(\cS, \cO_\cS)$ be a $\TO$-extension of $(\cR,\cO_{\cR})$, so $(\cS,\cO_{\cS})$ is a model of $\TO$ which is also an $\LO$-extension of $(\cR,\cO_{\cR})$. If $\cS$ is a simple $T$-extension of $\cR$, then we say that $(\cS,\cO_{\cS})$ is a \textbf{simple $\TO$-extension} of $(\cR,\cO_{\cR})$.

\begin{fact}[\cite{DL95}, Remark 3.8]\label{two ring extensions} Let $(\cR,\cO_\cR)\models \TO$ and let $\cS$ be a simple $T$-extension of $\cR$. There are at most two $T$-convex valuation rings $\cO_1$ and $\cO_2$ of $\cS$ which make $\cS$ a $T_{\cO}$-extension of $\cR$:
$$\cO_1\coloneqq\{y \in \cS:|y|<u \text{ for some }u \in \cO_\cR\}, \quad \cO_2\coloneqq\{y \in \cS:|y|<d \text{ for all }d \in \cR\text{ with }d>\cO_\cR\}.$$
If the cut $\cO_\cR^\downarrow$ in $\cR$ is realized by $b \in \cS$, then $b$ belongs to $\cO_2$ but not $\cO_1$, so $\cO_1\subsetneq \cO_2$. If no element in $\cS$ realizes the cut $\cO_\cR^\downarrow$, then $\cO_1=\cO_2$.
\end{fact}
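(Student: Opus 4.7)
The plan is to show that any $T$-convex valuation ring $\cO_\cS$ making $(\cS, \cO_\cS)$ a $\TO$-extension of $(\cR, \cO_\cR)$ must satisfy $\cO_1 \subseteq \cO_\cS \subseteq \cO_2$, and that simpleness of the extension forces $\cO_\cS$ to equal one of these endpoints.

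For the two containments: $\cO_\cS$ is a convex subring of $\cS$ containing $\cO_\cR$, hence contains the convex hull $\cO_1$ of $\cO_\cR$ in $\cS$. For the upper bound, if $y \in \cO_\cS$ satisfied $|y| \geq d$ for some $d \in \cR$ with $d > \cO_\cR$, convexity would force $d \in \cO_\cS \cap \cR = \cO_\cR$, a contradiction, so $\cO_\cS \subseteq \cO_2$. If no element of $\cS$ realizes the cut $\cO_\cR^\downarrow$, then $\cO_1 = \cO_2$ directly from the definitions and $\cO_\cS$ coincides with both. Otherwise, any realizer $b$ belongs to $\cO_2 \setminus \cO_1$, witnessing strict containment.

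In the realized case, the crux is a dichotomy based on whether $\cO_\cS$ meets $\cO_2 \setminus \cO_1$. The supporting claim is: for any fixed $b \in \cO_2 \setminus \cO_1$, the $T$-convex subring of $\cS$ generated by $\cO_\cR \cup \{b\}$ equals $\cO_2$. Granted this, if some $y \in \cO_2 \setminus \cO_1$ lies in $\cO_\cS$, then applying the claim with $b := y$ gives $\cO_\cS \supseteq \cO_2$, forcing $\cO_\cS = \cO_2$; otherwise $\cO_\cS \subseteq \cO_1$, forcing $\cO_\cS = \cO_1$.

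The main obstacle is the supporting claim, which is where simpleness $\cS = \cR\langle a\rangle$ enters essentially. In the power-bounded setting, this admits a clean treatment via the induced value group extension: one verifies $\Gamma_1 = \Gamma_\cR \oplus \Lambda \cdot \delta$ with $\delta = -v(b)$, each positive $y \in \cO_2 \setminus \cO_1$ has valuation $-\mu\delta$ for some $\mu \in \Lambda^>$, and so $|y| \leq u \cdot b^\mu$ for some $u \in \cO_\cR$; since $x \mapsto x^\mu$ is $\cL(\emptyset)$-definable continuous (as $\mu \in \Lambda$), this places $y$ in the $T$-convex closure of $\cO_\cR \cup \{b\}$. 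The general o-minimal case requires a more delicate analysis of simple extensions but follows the same strategy. Finally, $\cO_1$ and $\cO_2$ are themselves $T$-convex valuation rings: $\cO_1$ is the convex hull of a $T$-convex subring in a $T$-extension (standardly $T$-convex), and $\cO_2$, when distinct from $\cO_1$, is the $T$-convex closure just described.
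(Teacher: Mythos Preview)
The paper does not prove this statement; it is recorded as a Fact with a citation to \cite[Remark~3.8]{DL95}, so there is no in-paper proof to compare against.

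Your sandwich $\cO_1\subseteq\cO_\cS\subseteq\cO_2$ and the unrealized-cut case are correct, as is the reduction of the realized case to your supporting claim. Your power-bounded argument for that claim is also essentially right, though note that it quietly assumes $(\cS,\cO_1)\models\TO$ in order to invoke the Wilkie inequality; this is true, but the justification ``convex hull of a $T$-convex subring is standardly $T$-convex'' already leans on the structure theory from \cite{DL95}.

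The genuine gap is the exponential case. Both ingredients you use are specific to power boundedness: the Wilkie inequality, which pins $v_1(y)$ into $\Lambda\cdot v_1(b)$ modulo $\Gamma_\cR$, and the fact that each $x\mapsto x^\mu$ for $\mu\in\Lambda$ is an $\cL(\emptyset)$-definable continuous unary function. When $T$ defines an exponential the Wilkie inequality fails outright---for $b\in\cO_2\setminus\cO_1$ the elements $b,\exp b,\exp\exp b,\ldots$ all lie in $\cO_2\setminus\cO_1$ and their $v_1$-values are $\Q$-independent over $\Gamma_\cR$---and there is no field of exponents to exploit. The assertion that the general case ``follows the same strategy'' is therefore unsupported, and the strategy does not in fact survive. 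The argument in \cite{DL95} instead proceeds through the characterization of $T$-convex subrings of $\cS$ as exactly the convex hulls of elementary $\cL$-substructures, combined with exchange for $\dcl_\cL$; this handles all o-minimal $T$ uniformly without power functions.
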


\begin{corollary}\label{two ring extensions2}
Let $(\cR,\cO_\cR)\models \TO$ and let $(\cS,\cO_\cS)$ be a simple $\TO$-extension of $(\cR,\cO_\cR)$. If $\Gamma_\cS = \Gamma_\cR$, then 
\[
\cO_{\cS}=\{y \in \cS:|y|<d \text{ for all }d \in \cR\text{ with }d>\cO_\cR\}
\]
If $\bk_{\cS} = \bk_{\cR}$, then 
\[
\cO_\cS =\{y \in \cS:|y|<u \text{ for some }u \in \cO_\cR\}.
\]
\end{corollary}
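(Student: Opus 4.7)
The plan is to apply Fact~\ref{two ring extensions} directly. That fact gives $\cO_\cS \in \{\cO_1,\cO_2\}$ with $\cO_1\subseteq \cO_2$, the two coinciding precisely when no element of $\cS$ realizes the cut $\cO_\cR^\downarrow$. When $\cO_1=\cO_2$, both claims of the corollary are trivial, so fix $b \in \cS$ realizing this cut. Then $b>\cO_\cR$ places $b$ outside $\cO_1$, while $b<d$ for every $d \in \cR$ with $d>\cO_\cR$ places $b$ inside $\cO_2$. I will prove both statements by contrapositive: if $\cO_\cS=\cO_1$ then the value of $b$ is new, and if $\cO_\cS=\cO_2$ then the residue of $b$ is new.

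For the first direction, assume $\cO_\cS=\cO_1$, so $v_\cS(b)<0$. To show $v_\cS(b)\notin \Gamma_\cR$, suppose to the contrary that $v_\cS(b)=v_\cR(c)$ for some $c \in \cR^>$; since this value is negative, $c>\cO_\cR$. Then $b/c$ is a unit of $\cO_\cS=\cO_1$, so in particular $c/b<u$ for some $u \in \cO_\cR^>$, giving $b>c/u$. But $c/u$ is an element of $\cR$ with $c/u>\cO_\cR$, contradicting that $b$ realizes the cut.

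For the second direction, assume $\cO_\cS=\cO_2$. A quick check shows $b$ is a unit of $\cO_\cS$: its reciprocal lies below $1$, hence below every $d \in \cR$ with $d>\cO_\cR$. To show $\pi(b)\notin \bk_\cR$, suppose to the contrary that $\pi(b)=\pi(c)$ for some $c \in \cO_\cR$, so that $b-c \in \smallo_\cS$. But $b>\cO_\cR$ and $c \in \cO_\cR$ force $b-c>\cO_\cR$, and the same reciprocal argument then shows $b-c$ is a unit of $\cO_2=\cO_\cS$, contradicting $b-c \in \smallo_\cS$.

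The two arguments are mirror images once the geometry is set up, and the whole corollary reduces to the single observation that any element realizing the cut $\cO_\cR^\downarrow$ must produce either a new value (if excluded from $\cO_\cS$) or a new residue (if included), depending on which of the two candidate valuation rings is chosen. No real obstacle arises beyond keeping the asymmetric defining conditions of $\cO_1$ and $\cO_2$ straight at each step.
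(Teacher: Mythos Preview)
Your proof is correct and follows essentially the same approach as the paper's: reduce to the case where some $b\in\cS$ realizes the cut $\cO_\cR^\downarrow$, then argue by contrapositive that $\cO_\cS=\cO_1$ forces $v(b)\notin\Gamma_\cR$ while $\cO_\cS=\cO_2$ forces $\pi(b)\notin\bk_\cR$. The only difference is that the paper simply asserts these two implications, whereas you supply the explicit verifications.
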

\begin{proof}
This is clear if no element in $\cS$ realizes the cut $\cO_\cR^\downarrow$. Suppose that $b \in \cS$ realizes this cut. If 
\[
\cO_{\cS}=\{y \in \cS:|y|<d \text{ for all }d \in \cR\text{ with }d>\cO_\cR\},
\]
then $\pi(b) \in \bk_{\cS} \setminus \bk_{\cR}$. Likewise, if 
\[
\cO_{\cS} =\{y \in \cS:|y|<u \text{ for some }u \in \cO_\cR\},
\]
then $v(b) \in \Gamma_\cS \setminus \Gamma_\cR$. The corollary follows by Fact~\ref{two ring extensions}.
\end{proof}

The theory $\TO$ is tame, regardless of whether $T$ is power bounded. However, when $T$ is power bounded, van den Dries showed that we have an analog of the Abhyankar-Zariski inequality, called the \emph{Wilkie inequality}:

\begin{fact}[The Wilkie inequality {\cite[Section 5]{vdD97}}]
Suppose that $T$ is power bounded with field of exponents $\Lambda$. Let $(\cR,\cO_\cR)\models \TO$, let $(\cS,\cO_\cS)$ be a $\TO$-extension of $(\cR,\cO_\cR)$, and suppose that $\rkL(\cS|\cR)$ is finite. Then 
\[
\rkL(\cS|\cR)\ \geq\ \rkL(\k_{\cS}|\k_\cR)+\dim_\Lambda(\Gamma_{\cS}/\Gamma_\cR)
\]
\end{fact}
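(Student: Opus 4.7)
The plan is to prove the inequality by induction on $n\coloneqq \rkL(\cS|\cR)$. The base case $n=0$ forces $\cS=\cR$, making both sides zero. For the inductive step with $n\geq 1$, I would pick any $a\in\cS\setminus\cR$ and form the intermediate field $\cR'\coloneqq \cR\langle a\rangle$ with $\cO_{\cR'}\coloneqq \cO_\cS\cap \cR'$. By Fact~\ref{two ring extensions}, $\cO_{\cR'}$ equals either $\cO_1$ or $\cO_2$ (or their common value), so in particular it is a $T$-convex valuation ring of $\cR'$; thus $(\cR',\cO_{\cR'})$ is a simple $\TO$-extension of $(\cR,\cO_\cR)$ and $(\cS,\cO_\cS)$ is a $\TO$-extension of $(\cR',\cO_{\cR'})$ with $\rkL(\cS|\cR')=n-1$. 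Applying the induction hypothesis to this smaller extension and combining with additivity of $\cL$-rank, transitivity of residue-field rank, and additivity of $\Lambda$-dimension in the value-group tower, the full inequality reduces to the simple case
\[
1\ \geq\ \rkL(\bk_{\cR'}|\bk_\cR)\ +\ \dim_\Lambda(\Gamma_{\cR'}/\Gamma_\cR).
\]

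For this simple case, I would first argue that the residue field and value group cannot both grow. If no element of $\cR'$ realizes the cut $\cO_\cR^\downarrow$, then Fact~\ref{two ring extensions} gives $\cO_1=\cO_2$ and Corollary~\ref{two ring extensions2} yields $\bk_{\cR'}=\bk_\cR$ and $\Gamma_{\cR'}=\Gamma_\cR$ simultaneously. Otherwise there are exactly two possible $\TO$-extensions: if $\cO_{\cR'}=\cO_1$ then the realization $b$ of the cut gives $v(b)\in\Gamma_{\cR'}\setminus\Gamma_\cR$ but the residue field is unchanged by Corollary~\ref{two ring extensions2}, while if $\cO_{\cR'}=\cO_2$ then $\pi(b)\in\bk_{\cR'}\setminus\bk_\cR$ but the value group is unchanged. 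In every case, at most one summand on the right of the reduced inequality is nonzero.

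To bound that summand by $1$, I would split into two subcases. If $\Gamma_{\cR'}=\Gamma_\cR$, choose $u\in\cR^\times$ with $v(u)=v(a)$ and replace $a$ by $a/u$, reducing to $a\in\cO_{\cR'}^\times$; one then shows that $\pi(a)$ $\dclL$-generates $\bk_{\cR'}$ over $\bk_\cR$, using that every element of $\cR'$ has the form $f(a,\bar r)$ for some $\cL(\emptyset)$-definable $f$ and $\bar r\in\cR$, together with the fact that $\bk$ carries an $\cL$-structure making it a model of $T$ in which $\pi$ commutes with $\cL$-definable continuous functions on $\cO$-valued inputs. Dually, if $\bk_{\cR'}=\bk_\cR$, one shows that $v(a)$ spans $\Gamma_{\cR'}/\Gamma_\cR$ as a $\Lambda$-vector space. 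The main obstacle is precisely this simple-extension analysis: arguing that an arbitrary element $f(a,\bar r)\in\cR'$ has its residue in the $\dclL$-closure of $\bk_\cR\cup\{\pi(a)\}$, or its valuation in $\Lambda\cdot v(a)+\Gamma_\cR$. In the valuation case this is where power-boundedness enters decisively via Miller's dichotomy: any $\cL$-definable unary function is eventually controlled by a power function, so valuations of the values $f(a,\bar r)$ fall asymptotically into $\Lambda\cdot v(a)+\Gamma_\cR$, yielding $\dim_\Lambda(\Gamma_{\cR'}/\Gamma_\cR)\leq 1$.
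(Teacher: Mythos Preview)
The paper does not prove this statement --- it is cited as a Fact from \cite[Section~5]{vdD97} --- so there is no in-paper proof to compare against. Your inductive reduction to the case of a simple $\TO$-extension is correct and is the standard opening move. The gap is in your handling of the simple case.

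You claim that Corollary~\ref{two ring extensions2} shows at most one of $\bk_{\cR'},\Gamma_{\cR'}$ can properly extend $\bk_\cR,\Gamma_\cR$. But that corollary runs the other way: it says \emph{if} $\Gamma_{\cR'}=\Gamma_\cR$ then $\cO_{\cR'}=\cO_2$, and \emph{if} $\bk_{\cR'}=\bk_\cR$ then $\cO_{\cR'}=\cO_1$. You are invoking the converses --- that $\cO_{\cR'}=\cO_1$ forces $\bk_{\cR'}=\bk_\cR$, that $\cO_{\cR'}=\cO_2$ forces $\Gamma_{\cR'}=\Gamma_\cR$, and that $\cO_1=\cO_2$ forces the extension to be immediate --- none of which is established there. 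Indeed, the assertion ``in a simple $\TO$-extension the residue field and value group cannot both grow'' is exactly Corollary~\ref{wilkie corollary}, which the paper \emph{deduces from} the Wilkie inequality. Note also that Fact~\ref{two ring extensions} and Corollary~\ref{two ring extensions2} hold for arbitrary $T$, whereas the Wilkie inequality fails when $T$ is exponential, so no argument using only those two ingredients can possibly succeed.

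The genuine content of van den Dries's proof is what you defer to your final paragraph: one must use power-boundedness --- via the valuation property that every $\cL(\cR)$-definable unary function is asymptotic to $cx^\lambda$ on each side of each point --- to show directly that $\dim_\Lambda(\Gamma_{\cR'}/\Gamma_\cR)\leq 1$, separately that $\rkL(\bk_{\cR'}|\bk_\cR)\leq 1$, and then that the two cannot both equal~$1$. Your sketch gestures at these bounds but only under the already-assumed hypothesis that the other invariant is unchanged; the step you treated as settled is where the real work lies.
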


We will often use the following consequence of this inequality:

\begin{corollary}\label{wilkie corollary}
Suppose that $T$ is power bounded, let $(\cR,\cO_\cR)\models \TO$, and let $(\cS,\cO_\cS)$ be a simple $\TO$-extension of $(\cR,\cO_\cR)$. Then either $\k_{\cS}=\k_\cR$ or $\Gamma_{\cS}=\Gamma_\cR$.
\end{corollary}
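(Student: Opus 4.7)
The plan is to derive this corollary as a direct consequence of the Wilkie inequality just stated, together with the fact that both $\k_\cR$ and $\Gamma_\cR$ are ``closed'' in the appropriate sense inside $\k_\cS$ and $\Gamma_\cS$.

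First, since $(\cS,\cO_\cS)$ is a simple $\TO$-extension of $(\cR,\cO_\cR)$, the underlying $T$-extension is simple, so $\rkL(\cS|\cR) = 1$. Applying the Wilkie inequality gives
\[
1 \ \geq\ \rkL(\k_\cS|\k_\cR) + \dim_\Lambda(\Gamma_\cS/\Gamma_\cR).
\]
Both summands on the right are non-negative integers, so at least one of them must be $0$. I would then argue that each of these vanishing conditions yields the corresponding equality.

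If $\rkL(\k_\cS|\k_\cR) = 0$, then $\k_\cS$ is contained in $\dclL(\k_\cR)$. But $\k_\cR$, considered with its induced structure, is a model of $T$, so $\dclL(\k_\cR) = \k_\cR$ (here I am using the fact, recalled just before Fact~\ref{two ring extensions}, that the definable closure of a subset of a model of $T$ is already an elementary substructure, together with the stable embeddedness of the residue field as an $\cL$-structure). Hence $\k_\cS = \k_\cR$. Symmetrically, if $\dim_\Lambda(\Gamma_\cS/\Gamma_\cR) = 0$, then the quotient $\Gamma_\cS/\Gamma_\cR$ is the trivial $\Lambda$-vector space (recalling from the preliminaries that in the power-bounded case $\Gamma$ carries a natural $\Lambda$-vector space structure), so $\Gamma_\cS = \Gamma_\cR$.

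There is no real obstacle here: the Wilkie inequality does all of the work, and the only thing to check is that the two ``dimension zero'' conditions on the right-hand side are actually equivalent to the respective equalities $\k_\cS = \k_\cR$ and $\Gamma_\cS = \Gamma_\cR$, which is immediate from $\k_\cR \models T$ and from the $\Lambda$-vector space structure on $\Gamma$.
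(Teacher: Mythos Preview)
Your argument is correct and is exactly the intended one: the paper states the corollary as an immediate consequence of the Wilkie inequality and gives no separate proof, and your write-up simply makes explicit the obvious derivation from $\rkL(\cS|\cR)=1$. The only remark is that invoking stable embeddedness of the residue field is more than you need; it suffices that $\k_\cR$ and $\k_\cS$ are both models of $T$ with $\k_\cR\subseteq\k_\cS$, so $\rkL(\k_\cS|\k_\cR)=0$ already forces $\k_\cS=\dclL(\k_\cR)=\k_\cR$.
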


Of course, it may be the case that for an extension $(\cR,\cO_\cR)\preceq (\cS,\cO_\cS)\models \TO$, we have that both $\k_{\cS}=\k_\cR$ and $\Gamma_{\cS}=\Gamma_\cR$. In this case, $(\cS,\cO_{\cS})$ is said to be an \textbf{immediate extension} of $(\cR,\cO_\cR)$.

Let $(a_\rho)$ be a well-indexed sequence of elements of $\cR$, so $\rho$ ranges ordinals less than $\lambda$ for some limit ordinal $\lambda$. We say that $(a_\rho)$ is \textbf{pseudocauchy} if there is an index $\rho_0$ such that
\[
v(a_\tau- a_\sigma)\ >\ v(a_\sigma - a_\rho)
\]
for all $\rho_0 < \rho < \sigma < \tau < \lambda$. 
An element $a$ in some $\TO$ extension of $\cR$ is called a \textbf{pseudolimit of $(a_\rho)$} if there is an index $\rho_0$ such that
\[
v(a- a_\sigma)\ >\ v(a - a_\rho)
\]
for all $\rho_0 < \rho < \sigma<\lambda$. If $(\cS,\cO_\cS)$ is an immediate extension of $(\cR,\cO_\cR)$, then for any $a \in \cS\setminus \cR$, there is a pseudocauchy sequence $(a_\rho)$ in $\cR$ with pseudolimit $a$ and with no pseudolimits in $\cR$; see~\cite{Ka42}.

\subsection*{Monomial groups}\label{subsec:monomials}
Let $(\cR,\cO) \models \TO$. A \textbf{section of $v$} is a group homomorphism $s \colon \Gamma\to \cR^\times$ such that $v\circ s$ is the identity on $\Gamma$.
A \textbf{monomial group} for $(\cR,\cO)$ is a multiplicative subgroup $\mfM \subseteq \cR^\times$ such that $v\colon \mfM\to \Gamma$ is a group isomorphism. If $s$ is a section of $v$, then $s(\Gamma)$ is a monomial group, and if $\mfM$ is a monomial group, then $(v|_{\mfM})\inv$ is a section of $v$. Note that any monomial group is necessarily a subgroup of $\cR^>$, as $\cR$ is real closed and $\Gamma$ is divisible. 

In this paper, we will restrict our attention to monomial groups that are compatible with the o-minimal structure on $\cR$. These monomial groups should respect the power functions in the case that $T$ is power bounded and the exponential function when $T$ is not power bounded. More precisely, we say that a monomial group $\mfM \subseteq \cR^>$ is \textbf{$T$-compatible} if either
\begin{enumerate}
\item $T$ is power bounded and $\mfM$ is closed under all power functions, or
\item $T$ defines an exponential function $\exp$ and $\mfM^\succ \coloneqq \{\fm \in \mfM: \fm>1\}$ is closed under $\exp$.
\end{enumerate}
We say that a section $s$ of $v$ is $T$-compatible if the corresponding monomial group $s(\Gamma)$ is $T$-compatible. If $s\colon \Gamma\to \cR^>$ is $T$-compatible and $T$ is power bounded with field of exponents $\Lambda$, then $s$ is an ordered $\Lambda$-vector space embedding.

\begin{lemma}
Any model $(\cR,\cO) \models \TO$ admits a $T$-compatible monomial group $\mfM$.
\end{lemma}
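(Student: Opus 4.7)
The plan is to invoke Zorn's lemma on the class $\mathcal{P}$ of ``partial $T$-compatible sections'': pairs $(H,s)$ where $H \subseteq \Gamma$ is a subgroup and $s\colon H \to \cR^>$ is a group homomorphism with $v \circ s = \id_H$, satisfying the applicable compatibility condition. For power bounded $T$ with field of exponents $\Lambda$, we additionally require that $H$ be a $\Lambda$-subspace of $\Gamma$ and that $s$ be $\Lambda$-linear, meaning $s(\lambda\gamma) = s(\gamma)^\lambda$ for all $\lambda \in \Lambda$. For $T$ defining $\exp$, we require $\exp(s(\gamma)) \in s(H)$ for every $\gamma < 0$ in $H$. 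The trivial pair $(\{0\},\, 0 \mapsto 1)$ lies in $\mathcal{P}$, chains have upper bounds given by unions, and so Zorn's lemma produces a maximal element $(H^*, s^*)$. It then suffices to show $H^* = \Gamma$, whereupon $\mfM \coloneqq s^*(\Gamma)$ is the required monomial group.

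To prove $H^* = \Gamma$, I would argue by contradiction: assume some $\gamma_0 \in \Gamma \setminus H^*$ and construct a strict extension of $(H^*, s^*)$ in $\mathcal{P}$. In the power bounded case this is straightforward: the set $\{\lambda \in \Lambda : \lambda\gamma_0 \in H^*\}$ is a $\Lambda$-subspace of the field $\Lambda$, hence either $\{0\}$ or $\Lambda$, and the latter would force $\gamma_0 \in H^*$. So $\gamma_0$ is $\Lambda$-linearly independent over $H^*$, and choosing any $m \in \cR^>$ with $v(m) = \gamma_0$ and defining $s^{**}(h + \lambda\gamma_0) \coloneqq s^*(h) \cdot m^\lambda$ on $H^* \oplus \Lambda\gamma_0$ yields a $\Lambda$-linear section that automatically respects the power functions, contradicting maximality.

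The exponential case is more delicate, and this is where the main obstacle lies. After possibly replacing $\gamma_0$ by $-\gamma_0$, we may take $\gamma_0 < 0$; fix some $m_0 \in \cR^>$ with $v(m_0) = \gamma_0$, so that $m_0 > \cO$. Since $\cO$ is closed under $\exp$ (as $\exp$ is $\cL(\emptyset)$-definable), a short argument shows that both $\exp$ and $\log$ send the class of elements $> \cO$ to itself, so one can form the doubly infinite orbit $(m_n)_{n \in \Z}$ via $m_{n+1} \coloneqq \log(m_n)$ for $n \geq 0$ and $m_{n-1} \coloneqq \exp(m_n)$ for $n < 0$, with every $m_n > \cO$ and $\exp(m_n) = m_{n-1}$. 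Setting $\gamma_n \coloneqq v(m_n) < 0$, the plan is to adjoin the $\gamma_n$'s to $H^*$, declare $s^{**}(\gamma_n) \coloneqq m_n$, and extend $\Q$-linearly to $H^{**} \coloneqq H^* + \sum_n \Q\gamma_n$. The relations $\exp(m_n) = m_{n-1}$ then immediately handle exp-compatibility at each new generator.

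The hard part will be verifying exp-compatibility of $s^{**}$ at \emph{arbitrary} negative elements of $H^{**}$: for $\eta = h + \sum q_n\gamma_n < 0$, one must show $v(\exp(s^{**}(\eta))) \in H^{**}$ and that $s^{**}$ realizes this value. I expect this to follow from a valuation-theoretic analysis showing that the values $\gamma_n$ grow quickly enough as $n \to -\infty$ that any negative $\Q$-linear combination of the $\gamma_n$'s (together with an element of $H^*$) is dominated by a single $\gamma_{n_0}$, reducing the compatibility check to the already-settled case $\exp(m_{n_0}) = m_{n_0-1}$. A careful choice of $m_0$ within its coset modulo $\cO^\times$, together with an auxiliary argument ensuring $\Q$-linear independence of the orbit over $H^*$, is likely needed to rule out spurious coincidences that would obstruct this reduction.
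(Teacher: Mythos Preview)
Your power bounded case is correct and is essentially the paper's argument: the paper simply takes $\mfM$ to be a $\Lambda$-linear complement of $(\cO^\times)^>$ inside the $\Lambda$-vector space $\cR^>$, and your Zorn's-lemma extension step is the standard way to manufacture such a complement.

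The exponential case has a genuine gap, and the dominance heuristic you propose cannot close it. Take the simplest nontrivial element $\eta = 2\gamma_0$: then $s^{**}(\eta) = m_0^2$, and exp-compatibility requires $\exp(m_0^2) \in s^{**}(H^{**})$, i.e.\ $\exp(m_0^2) = s^*(h)\prod_n m_n^{q_n}$ for some $h \in H^*$ and rationals $q_n$. There is no reason for this to hold: $\exp$ does not respect the multiplicative structure of the partial monomial group, and $v\big(\exp(m_0^2)\big)$ is typically a new element of $\Gamma$ not lying in $H^* + \sum_n \Q\gamma_n$ (with $m_0 = x$ in a transseries-like model, $e^{x^2}$ is not a product of rational powers of $x, e^x, e^{e^x}, \ldots$). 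Adjusting $m_0$ within its $\cO^\times$-coset does not help, since $(m_0 u)^2 = m_0^2 u^2$ still has valuation $2\gamma_0$ and the same obstruction recurs. Your ``reduction to a single $\gamma_{n_0}$'' confuses having the same valuation with being equal: even if $\eta$ is archimedean-equivalent to some $\gamma_{n_0}$, the element $s^{**}(\eta)$ is not $m_{n_0}$, so $\exp(s^{**}(\eta))$ is not $m_{n_0-1}$. To repair this you would have to close $s^{**}(H^{**})^\succ$ under $\exp$, but that closure is itself a transfinite construction, not a single Zorn step. The paper does not attempt a direct argument here: it invokes Ressayre's dyadic development of real closed exponential fields, which builds the additive complement $\log(\mfM)$ of $\cO$ and the monomial group simultaneously by a careful transfinite recursion, and notes that the method adapts to any $T$-convex valuation ring and exponential.
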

\begin{proof}
If $T$ defines an exponential function, then the existence of a $T$-compatible monomial group follows Ressayre's dyadic representation of real closed exponential fields~\cite[Theorem 4]{Re93}. Though Ressayre builds a compatible monomial group with respect to the archimedean valuation and the exponential function $2^x$, his methods adapt to the construction of a compatible monomial group for any exponential and any $T$-convex valuation ring. Suppose $T$ is power bounded with field of exponents $\Lambda$. Then $\cR^>$ is an ordered $\Lambda$-vector space and $(\cO^\times)^>\coloneqq \{u \in \cR^>:v(u) = 0\}$ is a $\Lambda$-subspace of $\cR^>$, so we may take a $\Lambda$-subspace $\mfM\subseteq \cR^>$ which is a $\Lambda$-vector space complement to $(\cO^\times)^>$. Then every $a \in \cR^\times$ can be uniquely represented as a product of some $\fm \in \mfM$ and some $u \in \cO^\times$, so $\mfM$ is a $T$-compatible monomial group for $(\cR,\cO)$.
\end{proof}

Let $\LM\coloneqq \LO\cup\{\mfM\}$, and let $\TM$ be the $\LM$-theory which extends $\TO$ by axioms stating that $\mfM$ is a $T$-compatible monomial group.

\section{Quantifier elimination for power bounded \texorpdfstring{$T$}{T}}\label{sec:qe}
In this section, we assume that $T$ is power bounded with field of exponents $\Lambda$. We will show that $\TM$ is model complete. This model completeness is a by-product of a quantifier elimination proof in an expanded language: Let $\Lgks$ be the three-sorted language with sorts for $\cR$ and the residue field $\bk_\cR$, both in the language $\cL$, and a sort for the value group $\Gamma_\cR$ in the language of ordered $\Lambda$-vector spaces. We include a function symbol $\pi\colon \cR \to \bk_\cR$ for the residue map (defined to be zero off of the valuation ring), a function symbol $v\colon \cR^\times \to \Gamma_\cR$ for the valuation map, and a function symbol $s\colon \Gamma_\cR \to \cR^>$ for the $T$-compatible section corresponding to $\mfM$. We do not include relation symbols for $\cO$ and $\mfM$ in the sort for $\cR$, but these predicates are $\Lgks(\emptyset)$-definable: the monomial group $\mfM$ is defined by $\{y \in \cR^>: s(v(y)) = y\}$ and the valuation ring $\cO$ is defined by $\{y \in \cR: v(y)\geq 0\}$. Any model $\cR = (\cR,\cO,\mfM)\models \TM$ admits a unique expansion to an $\Lgks$-structure $(\cR,\Gamma_\cR,\bk_\cR)$, and if $\cR\subseteq \cS$ are models of $\TM$, then the expansion $(\cR,\Gamma_\cR,\bk_\cR)$ is an $\Lgks$-substructure of the expansion $(\cS,\Gamma_\cS,\bk_\cS)$. It is therefore harmless to refer to the three-sorted $\Lgks$-structure $(\cR,\Gamma_\cR,\bk_\cR)$ also as a model of $\TM$.

\begin{theorem}\label{thm:qe}
Suppose $T$ has quantifier elimination and a universal axiomatization. Then $\TM$ has quantifier elimination in the language $\Lgks$.
\begin{proof}
Let $(\cR,\Gamma_\cR,\bk_\cR)\models \TM$ and $(\cS,\Gamma_\cS,\bk_\cS)\models \TM$ be $|\cR|^+ $-saturated. Let $(\cA,\Gamma_\cA,\bk_\cA)$ be a common $\Lgks$-substructure. To show that $\TM$ has quantifier elimination, we need to show that the inclusion $(\cA,\Gamma_\cA,\bk_\cA)\subseteq (\cS,\Gamma_\cS,\bk_\cS)$ extends to an $\Lgks$-embedding $(\cR,\Gamma_\cR,\bk_\cR)\to (\cS,\Gamma_\cS,\bk_\cS)$; see~\cite[Corollary B.11.9]{ADH17}. As $T$ already has quantifier elimination and a universal axiomatization, both $\cA$ and $\bk_\cA$ are models of $T$ and $\Gamma_\cA$ is a $\Lambda$-vector space. Note that the valuation $v\colon \cA^\times \to \Gamma_\cA$ is surjective, as it has a section, but that the residue map $\pi\colon \cA\to \bk_\cA$ need not be surjective.
\begin{claim}\label{claim1}
By quantifier elimination for $T$ and saturation, we can extend 
the inclusion $\bk_\cA \subseteq \bk_\cS$ to an $\cL(\bk_\cA)$-embedding $\bk_\cR\to \bk_\cS$.
\end{claim}
\begin{claim}\label{claim2}
We can extend the inclusion $\Gamma_\cA \subseteq \Gamma_\cS$ to an embedding $\Gamma_\cR\to \Gamma_S$. Suppose $\alpha \in \Gamma_\cR \setminus \Gamma_\cA$. Using saturation, take $\beta \in \Gamma_{\cS}$, realizing the same cut as $\alpha$ over $\Gamma_{\cA}$. Put $\fm \coloneqq s(\alpha)$ and $\fn \coloneqq s(\beta)$, and note that $\fn$ realizes the same cut over $\cA$ as $\fm$, so we get an $\cL$-embedding $f\colon \cA\langle \fm\rangle\to \cS$ which sends $\fm$ to $\fn$. 
By Corollary~\ref{wilkie corollary}, we have $\pi(\cA\langle \fm\rangle)= \pi(\cA\langle \fn\rangle)= \pi(\cA)$. It follows from Corollary~\ref{two ring extensions2} that for $y \in \cA\langle \fm \rangle$, we have
\[
v(y)\geq 0\ \Longleftrightarrow\ |y|<u\text{ for some }u \in \cA\text{ with }v(u)\geq 0\ \Longleftrightarrow\ v(f(y))\geq 0
\]
so $f$ is even an $\LO$-embedding. In order to extend $f$ to an $\Lgks$-embedding
\[
(\cA\langle \fm\rangle,\Gamma_{\cA}\oplus \Lambda \alpha,\bk_\cR)\to (\cS,\Gamma_{\cS},\bk_{\cS}),
\]
it remains to note that for $\gamma + \lambda\alpha \in \Gamma_{\cA}\oplus\Lambda\alpha$, we have
\[
f(s(\gamma+ \lambda\alpha))\ =\ f(s(\gamma)\fm^\lambda)\ =\ s(\gamma)\fn^\lambda\ =\ s(\gamma+ \lambda\beta).
\]
\end{claim}
\begin{claim}\label{claim3}
We can extend the inclusion $\cA \subseteq \cS$ such that $\pi : \cA \to \bk_\cA$ is surjective. Suppose $\bar{a}\in \bk_\cA\setminus \pi(\cA)$. Let $a \in \cA$ and $b \in \cS$ be lifts of $\bar{a}$. Note that $a$ and $b$ both realize the cut $$\{y \in \cA : y < \cO_\cA\} \cup \{y\in \cO_\cA: \pi(y)<\bar{a} \},$$ 
so there is an $\cL$-embedding $f\colon \cA\langle a\rangle\to \cS$ sending $a$ to $b$. 
By Corollary~\ref{wilkie corollary}, we have that $\Gamma_\cA=\Gamma_{\cA\langle a \rangle}= \Gamma_{\cA \langle b \rangle}$. Then by Corollary~\ref{two ring extensions2}, we have for $y \in \cA\langle a \rangle$ that 
\[
v(y)\geq 0\ \Longleftrightarrow\ |y|<d\text{ for all }d \in \cA^>\text{ with }v(d)< 0\ \Longleftrightarrow\ v(f(y))\geq 0.
\]
Thus $f$ is even an $\LO$-embedding, so it induces an $\Lgks$-embedding $(\cA\langle a \rangle,\Gamma_\cA, \bk_\cR)\to (\cS,\Gamma_\cS, \bk_\cS)$.
\end{claim}
Now we extend the inclusion $(\cA,\Gamma_\cA,\bk_\cA)\subseteq (\cS,\Gamma_\cS,\bk_\cS)$ to an $\Lgks$-embedding $(\cR,\Gamma_\cR,\bk_\cR)\to (\cS,\Gamma_\cS,\bk_\cS)$. 
By the previous claims, we can arrange that $\Gamma_\cA=\Gamma_\cR$ and $\pi(\cA)=\bk_\cA = \bk_\cR$, so $\cR$ is an immediate extension of $\cA$. Let $a \in \cR \setminus \cA$, and take a pseudocauchy sequence $(a_\rho)$ in $\cA$ with pseudolimit $a$ and with no pseudolimits in $\cA$. Let $b \in \cS$ be a pseudolimit of $(a_\rho)$. Then by~\cite[Corollary 2.11]{Ka23}, there is a unique $\LO (\cA)$-embedding $f\colon \cA\langle a \rangle \to \cS$ sending $a$ to $b$. This $f$ induces an $\Lgks$-embedding $(\cA\langle a \rangle, \Gamma_\cR, \bk_\cR) \to (\cS, \Gamma_\cS, \bk_\cS)$.\qedhere
\end{proof}
\end{theorem}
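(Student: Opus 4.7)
The plan is to apply the standard saturation-based test for quantifier elimination: take $|\mathcal{R}|^+$-saturated $(\mathcal{S},\Gamma_\cS,\bk_\cS)\models\TM$ and a common $\Lgks$-substructure $(\cA,\Gamma_\cA,\bk_\cA)$ of both $(\mathcal{R},\Gamma_\cR,\bk_\cR)$ and $(\mathcal{S},\Gamma_\cS,\bk_\cS)$, and extend the inclusion $\cA\hookrightarrow\cS$ to an $\Lgks$-embedding on all of $\cR$. Since $T$ has QE and a universal axiomatization, $\cA$ and $\bk_\cA$ are automatically models of $T$; the presence of $s$ in the language further forces $v\colon\cA^\times\to\Gamma_\cA$ to be surjective and guarantees that the monomial group $s(\Gamma_\cA)$ lives inside $\cA$. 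Therefore, across the embedding problem, the only ``missing'' ingredients are residue classes, value group elements, and immediate pseudolimits.

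I would handle these in four back-and-forth stages, in the following order: (i) enlarge $\cA$ so that $\bk_\cA = \bk_\cR$; (ii) enlarge $\cA$ so that $\Gamma_\cA = \Gamma_\cR$; (iii) ensure that the residue map $\pi\colon\cA\to\bk_\cA$ is surjective; (iv) finish off the remaining immediate extension. Stage (i) is immediate from QE for $T$ and saturation of $\bk_\cS$. For stage (ii), given $\alpha\in\Gamma_\cR\setminus\Gamma_\cA$, pick $\beta\in\Gamma_\cS$ realizing the same cut over $\Gamma_\cA$ by saturation; the monomials $\fm=s(\alpha)$ and $\fn=s(\beta)$ then realize the same cut over $\cA$, giving an $\cL$-embedding $\cA\langle\fm\rangle\to\cS$. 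To promote this to an $\LO$-embedding, I would use Corollary~\ref{wilkie corollary} to conclude that neither side picks up new residues, and then Corollary~\ref{two ring extensions2} to pin down the valuation ring uniquely on both sides. Compatibility of $s$ on the enlarged value group $\Gamma_\cA\oplus\Lambda\alpha$ then follows because $s$ is $\Lambda$-linear through the power functions, so $s(\gamma+\lambda\alpha) = s(\gamma)\fm^\lambda$ is mapped to $s(\gamma)\fn^\lambda = s(\gamma+\lambda\beta)$ by design. Stage (iii) is symmetric: for $\bar a\in\bk_\cA\setminus\pi(\cA)$, lift $\bar a$ to $a\in\cR$ and $b\in\cS$ (the latter by saturation), observe that $a$ and $b$ realize the same cut over $\cA$, and again invoke Corollary~\ref{wilkie corollary} (no value-group growth) together with Corollary~\ref{two ring extensions2} to upgrade the $\cL$-embedding to an $\LO$-embedding; $s$ requires no further checking because $\Gamma$ is unchanged. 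Stage (iv), after the first three stages, reduces to an immediate extension $\cA\subseteq\cR$; for each $a\in\cR\setminus\cA$ I would pick a pseudocauchy sequence $(a_\rho)$ in $\cA$ with pseudolimit $a$ and no pseudolimit in $\cA$, use saturation to find a pseudolimit $b\in\cS$, and invoke the uniqueness-of-extension result for pseudolimits in immediate $\TO$-extensions (the cited \cite{Ka23}) to get an $\LO(\cA)$-embedding $\cA\langle a\rangle\to\cS$ sending $a\mapsto b$, which is automatically an $\Lgks$-embedding since $\Gamma$ and $\bk$ are unchanged.

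The main obstacle is stage (ii), the value-group step, because one must simultaneously control the residue field of the new simple $T$-extension $\cA\langle\fm\rangle$, the choice of $\TO$-valuation ring on it, and the behavior of $s$ on the new $\Lambda$-line in $\Gamma$. The crucial point that makes everything line up is the combination of the Wilkie inequality (forcing the residue field to stay fixed when $v$ grows in a simple extension) with the dichotomy of Corollary~\ref{two ring extensions2} (which then pins down $\cO$ uniquely); $T$-compatibility of the section ensures that $s$ automatically behaves correctly on $\Gamma_\cA\oplus\Lambda\alpha$. Once this is set up cleanly, stages (i), (iii) and (iv) are essentially forced, and model completeness of $\TM$ follows as an immediate by-product.
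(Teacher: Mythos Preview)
Your proposal is correct and follows essentially the same approach as the paper: the four stages you describe correspond exactly to the paper's Claims~\ref{claim1}--\ref{claim3} together with the final immediate-extension step, in the same order and using the same tools (Corollary~\ref{wilkie corollary}, Corollary~\ref{two ring extensions2}, $T$-compatibility of $s$, and the pseudolimit result from \cite{Ka23}). The only cosmetic difference is that you describe stage~(i) as ``enlarging $\cA$'' whereas the paper phrases it as fixing an embedding $\bk_\cR\to\bk_\cS$ on the residue sort, but these amount to the same thing in the three-sorted setup.
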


\begin{corollary}\label{cor:mc}
The theory $\TM$ is complete. If $T$ is model complete, then $\TM$ is also model complete in the language $\LM$.
\end{corollary}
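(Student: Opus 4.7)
The plan is to deduce both claims from Theorem~\ref{thm:qe} by a routine language-extension argument that handles its hypothesis of QE and a universal axiomatization.

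Extend $\cL$ to a language $\cL'$ by adjoining function symbols for all $\cL(\emptyset)$-definable functions; then $T$ has QE and a universal axiomatization in $\cL'$. Write $\LM'\coloneqq \cL'\cup\{\cO,\mfM\}$ and let $\Lgks'$ denote the associated three-sorted language. Applied to $\cL'$, Theorem~\ref{thm:qe} yields that $\TM$ has QE in $\Lgks'$.

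For completeness, I would identify the prime $\Lgks'$-substructure of an arbitrary model of $\TM$. On the main sort it is $\dcl_{\cL'}(\emptyset)=\cP$, the prime model of $T$, which is contained in $\cO$ and determined up to $\cL'$-isomorphism. Every nonzero element of $\cP$ is a unit in $\cP\subseteq \cO$, so $v(\cP^\times)=\{0\}$, and the value group sort of the prime substructure is trivial; the residue field sort is $\pi(\cP)$, which is $\cL'$-isomorphic to $\cP$. Hence the prime $\Lgks'$-substructure is determined up to isomorphism, and QE in $\Lgks'$ gives completeness of $\TM$ (in $\Lgks'$, and so in the reduct $\LM$).

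For model completeness in $\LM$, let $(\cR,\cO_\cR,\mfM_\cR)\subseteq(\cS,\cO_\cS,\mfM_\cS)$ be an $\LM$-embedding between models of $\TM$. Model completeness of $T$ in $\cL$ makes the underlying $\cL$-embedding elementary; in particular, it preserves every $\cL(\emptyset)$-definable function and thus promotes to an $\cL'$-embedding, and then, since $\cO$ and $\mfM$ are preserved, to an $\LM'$-embedding. Equipping both sides with their induced value group and residue field sorts, and noting that the embedding automatically commutes with $v$, $\pi$, and $s$ (since $\cO$ and $\mfM$ are preserved), we obtain an $\Lgks'$-embedding. QE of $\TM$ in $\Lgks'$ makes it elementary. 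Finally, because $\cO$ and $\mfM$ are $\Lgks'(\emptyset)$-definable in the main sort, every $\LM$-formula in main-sort variables is equivalent (in models of $\TM$) to an $\Lgks'$-formula in the same variables, so the original $\LM$-embedding is elementary.

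The only real work is the bookkeeping across the languages $\cL\subseteq\cL'$ and their $\{\cO,\mfM\}$-augmentations, together with checking that model completeness of $T$ bridges $\cL$- and $\cL'$-embeddings. No genuinely new obstacle arises, precisely because $\cO$ and $\mfM$ were set up from the start to be $\Lgks(\emptyset)$-definable.
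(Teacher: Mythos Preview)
Your proposal is correct and follows essentially the same approach as the paper: extend $\cL$ so that $T$ has QE and a universal axiomatization, apply Theorem~\ref{thm:qe}, and translate back to $\LM$ via the $\Lgks(\emptyset)$-definability of $\cO$ and $\mfM$. The paper differs only in presentation---it uses a Robinson-style test (extending from a common $\LM$-submodel into a saturated target) rather than directly checking that every $\LM$-embedding is elementary---and it explicitly invokes model completeness of $\TO$ (inherited from $T$) to ensure the residue-field map is an $\cL'$-embedding, a step you leave implicit but which is needed to make the promotion to an $\Lgks'$-embedding go through.
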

\begin{proof}
Let $\cP$ be the prime model of $T$. Then $(\cP,\{0\},\cP)$ admits an $\Lgks$-embedding into any model of $\TM$ since $\k \models T$, so $\TM$ is complete; see~\cite[Corollary B.11.7]{ADH17}. For model completeness in the language $\LM$, let $\cR$ and $\cS$ be models of $\TM$ and assume that $\cS$ is $|\cR|^+$-saturated. Let $\cA\models \TM$ be a common $\LM$-substructure of $\cR$ and $\cS$. By a variant of Robinson's model completeness test~\cite[Corollary B.10.4]{ADH17}, it is enough to show that the inclusion $\cA\subseteq\cS$ extends to an embedding $\cR\to \cS$. As $T$ is model complete, $\TO$ is as well by~\cite[Corollary 3.13]{DL95}, so $\cA$ is an elementary $\LO$-substructure of both $\cR$ and $\cS$. 

Extending our language $\cL$ by function symbols for $\cL(\emptyset)$-definable functions, we arrange that $T$ has quantifier elimination and a universal axiomatization. Augmenting by additional sorts for the value group and residue field, we view $\cR$ and $\cS$ as $\Lgks$-structures $(\cR,\Gamma_\cR,\bk_{\cR})$ and $(\cS,\Gamma_\cS,\bk_{\cS})$, where $s\colon \Gamma_\cR\to \cR^>$ is the section corresponding to the monomial group $\mfM$, and similarly for $\cS$. 
Given $\gamma \in v(\cA^\times)$, we have $v(s(\gamma)) = \gamma \in v(\cA^\times)$, so $s(\gamma)$ belongs to $\mfM_\cA$. Thus, $(\cA,v(\cA^\times),\pi(\cA))$ is a common $\Lgks$-structure of $(\cR,\Gamma_\cR,\bk_{\cR})$ and $(\cS,\Gamma_\cS,\bk_{\cS})$. Theorem~\ref{thm:qe} gives an $\Lgks$-embedding $(\cR,\Gamma_\cR,\bk_{\cR})\to (\cS,\Gamma_\cS,\bk_{\cS})$ over $(\cA,\Gamma_\cA,\bk_{\cA})$, which restricts to an $\LM$-embedding $\cR\to \cS$ over $\cA$.
\end{proof}

If $\cL$ is finite (that is, if $\cL$ has only finitely many non-logical symbols) and $T$ is decidable, then $T_\mfM$ is certainly effectively axiomatizable. By completeness of $T_\mfM$ and~\cite[Corollary B.6.9]{ADH17}, we deduce the following:

\begin{corollary}
If $\cL$ is finite and $T$ is decidable, $\TM$ is decidable.
\end{corollary}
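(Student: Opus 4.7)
The plan is to reduce the statement to the standard metatheorem that any complete and effectively axiomatized first-order theory is decidable (this is what \cite[Corollary B.6.9]{ADH17} codifies). Completeness of $\TM$ has already been established in Corollary~\ref{cor:mc}, so the only thing that really needs to be checked is that $\TM$ admits a recursively enumerable set of axioms once $\cL$ is finite and $T$ is decidable.

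For effective axiomatizability, I would list the axioms of $\TM$ and verify that each schema is r.e. First, the axioms of $T$ itself form an r.e. (in fact decidable) set, since $T$ is decidable. Second, the axioms asserting that $\cO$ is a proper convex subring of $\cR$ are finite in number, and the $T$-convexity schema consists of one sentence $\forall x\,(\cO(x) \to \cO(f(x)))$ for each $\cL(\emptyset)$-definable continuous function $f\colon \cR \to \cR$; since $\cL$ is finite, all $\cL$-formulas can be enumerated, and decidability of $T$ lets us recursively recognize those formulas which $T$ proves to define a continuous $\cR\to\cR$ function, so this schema is r.e. Third, the axioms saying $\mfM$ is a multiplicative subgroup mapped bijectively onto $\cR^\times/\cO^\times$ are finite in number. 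Finally, the $T$-compatibility axioms split according to Miller's dichotomy: either we add the schema ``$\mfM$ is closed under $f$'' for each $\cL(\emptyset)$-definable power function $f$, or we add the single axiom ``$\mfM^\succ$ is closed under $\exp$''; decidability of $T$ tells us which case applies, and enumerating definable power functions is r.e. for the same reason as above.

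Granted effective axiomatizability, the proof concludes in one line: by Corollary~\ref{cor:mc}, $\TM$ is complete, and a complete, recursively axiomatizable first-order theory is decidable, which is precisely the content of \cite[Corollary B.6.9]{ADH17}.

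I do not anticipate any serious obstacle here; the only mildly delicate point is the recognition, inside the $T$-convexity and power-function schemas, of which $\cL$-formulas define the intended functions, but this is routine once we recall that we have taken $T$ to have quantifier elimination and that $T$ is decidable. No extra structural work beyond what is already in the excerpt is needed.
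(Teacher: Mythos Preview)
Your proof is correct and follows the same approach as the paper, which simply asserts that $\TM$ is effectively axiomatizable (given finite $\cL$ and decidable $T$) and then invokes completeness from Corollary~\ref{cor:mc} together with \cite[Corollary B.6.9]{ADH17}. One minor point: since this corollary sits in the section where $T$ is already assumed power bounded, the exponential branch of your compatibility discussion is superfluous (though harmless), and you do not actually need quantifier elimination for $T$ to recognize which formulas define continuous functions---decidability of $T$ alone suffices.
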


We can also draw the usual corollaries about stable embeddedness and orthogonality. 
\begin{corollary}\label{cor:stembed}
The value group $\Gamma$ is purely stably embedded as an ordered $\Lambda$-vector space and orthogonal to the residue field, which is purely stably embedded as a model of $T$.
\begin{proof}
By extending $\cL$ by function symbols for all $\cL(\emptyset)$-definable functions, we may assume that $T$ has quantifier elimination and a universal axiomatization.
Let $(\cR,\Gamma_\cR,\bk_\cR)\models \TM$ and let $(\cA,\Gamma_\cA,\bk_\cA)$ be an $\Lgks$-substructure of $(\cR,\Gamma_\cR,\bk_\cR)$. Let $\gamma,\gamma'\in \Gamma_\cR^m$ with $\tp(\gamma/\Gamma_\cA)=\tp(\gamma'/\Gamma_\cA)$ in the language of ordered $\Lambda$-vector spaces, and let $r,r'\in \bk_\cR^n$ with $\tp_\cL(r/\bk_\cA)=\tp_\cL(r'/\bk_\cA)$. By a standard compactness argument, it suffices to show that $(\gamma,r)$ has the same $\Lgks$-type as $(\gamma',r')$ over $(\cA,\Gamma_\cA,\bk_\cA)$; see~\cite{CDH05}. As $r$ and $r'$ have the same type over $\bk_\cA$, we find an $\cL$-isomorphism $\bk_\cA \langle r \rangle\to \bk_\cA \langle r' \rangle$ mapping $r$ to $r'$. Using Claim~\ref{claim1} of the quantifier elimination proof, this extends to an $\Lgks$-isomorphism
$$(\cA , \Gamma_\cA, \bk_{\cA}\langle r \rangle) \to ( \cA , \Gamma_\cA, \bk_{\cA}\langle r' \rangle).$$
Let $\fm,\fm' \in \cR^m$ be the tuples $(s(\gamma_1),\ldots,s(\gamma_m))$ and $(s(\gamma_1'),\ldots,s(\gamma_m'))$, respectively. Then $v(\cA\langle \fm\rangle^\times) = \Gamma_{\cA}\oplus \Lambda \gamma_1\oplus \cdots\oplus \Lambda\gamma_m$, and similarly for $v(\cA\langle \fm'\rangle^\times)$. By Claim~\ref{claim2} of the quantifier elimination proof, we get an $\Lgks$-isomorphism
$$(\cA \langle \fm \rangle , v(\cA\langle \fm\rangle^\times), \bk_{\cA}\langle r \rangle) \to ( \cA \langle \fm' \rangle , v(\cA\langle \fm'\rangle^\times), \bk_{\cA}\langle r' \rangle).$$
This isomorphism is elementary by our quantifier elimination, giving us stable embeddedness and orthogonality. Purity follows directly by taking our substructure to be $(\cP,\{0\},\cP)$, where $\cP$ is the prime model of $T$. 
 \end{proof}
\end{corollary}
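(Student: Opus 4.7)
The plan is to deduce the corollary from the quantifier elimination result (Theorem \ref{thm:qe}) via a standard type-transfer argument as in \cite{CDH05}. First, I would expand $\cL$ by function symbols for $\cL(\emptyset)$-definable functions so that $T$ has quantifier elimination and a universal axiomatization, and view any $\cR\models\TM$ as an $\Lgks$-structure $(\cR,\Gamma_\cR,\bk_\cR)$.

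To establish stable embeddedness of both $\Gamma$ and $\bk$ together with their orthogonality, a standard compactness argument reduces the whole statement to the following transfer principle: for any $(\cR,\Gamma_\cR,\bk_\cR)\models\TM$, any $\Lgks$-substructure $(\cA,\Gamma_\cA,\bk_\cA)$, and tuples $\gamma,\gamma'\in\Gamma_\cR^m$ with the same ordered $\Lambda$-vector space type over $\Gamma_\cA$ together with $r,r'\in\bk_\cR^n$ with the same $\cL$-type over $\bk_\cA$, the pairs $(\gamma,r)$ and $(\gamma',r')$ have the same $\Lgks$-type over $(\cA,\Gamma_\cA,\bk_\cA)$.

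To produce this type-equality, I would build an $\Lgks$-isomorphism between the substructures generated on each side and then invoke Theorem \ref{thm:qe}. First, the equality of residue-field types yields an $\cL$-isomorphism $\bk_\cA\langle r\rangle\to\bk_\cA\langle r'\rangle$ sending $r\mapsto r'$; the strategy of Claim \ref{claim1} lifts this to an $\Lgks$-isomorphism
\[
(\cA,\Gamma_\cA,\bk_\cA\langle r\rangle)\to(\cA,\Gamma_\cA,\bk_\cA\langle r'\rangle).
\]
Next, set $\fm\coloneqq(s(\gamma_1),\ldots,s(\gamma_m))$ and $\fm'$ analogously; I would iterate the argument of Claim \ref{claim2} one coordinate at a time. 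At each step, the hypothesis that $\gamma$ and $\gamma'$ share an ordered $\Lambda$-vector space type over $\Gamma_\cA$ guarantees that the next component $s(\gamma_i)$ realizes the same cut over the partially built substructure as its counterpart, so Claim \ref{claim2} applies and extends the isomorphism. After $m$ steps this yields an $\Lgks$-isomorphism
\[
(\cA\langle\fm\rangle,v(\cA\langle\fm\rangle^\times),\bk_\cA\langle r\rangle)\to(\cA\langle\fm'\rangle,v(\cA\langle\fm'\rangle^\times),\bk_\cA\langle r'\rangle).
\]
Quantifier elimination then promotes this isomorphism to an elementary one, delivering the required equality of $\Lgks$-types.

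For purity I would specialize the substructure to $(\cP,\{0\},\cP)$, where $\cP$ is the prime model of $T$: any $\Lgks(\cR)$-definable subset of $\Gamma^n$ (respectively of $\bk^n$) then involves no parameters from the other sort, so it is definable in the language of ordered $\Lambda$-vector spaces (respectively in $\cL$). The main bookkeeping obstacle is the iterative use of Claim \ref{claim2}: one must keep track of the value group of the growing substructure so that the next $s(\gamma_i)$ still realizes the cut in Fact \ref{two ring extensions} that is fixed by the ordered $\Lambda$-vector space type of $\gamma$ over $\Gamma_\cA$. Once this is set up carefully, everything reduces cleanly to Theorem \ref{thm:qe}.
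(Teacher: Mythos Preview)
Your proposal is correct and follows essentially the same route as the paper's proof: reduce via the compactness criterion of \cite{CDH05}, use Claim~\ref{claim1} to handle the residue-field extension, iterate Claim~\ref{claim2} to adjoin the monomials $s(\gamma_i)$, and invoke Theorem~\ref{thm:qe} to make the resulting $\Lgks$-isomorphism elementary, with purity coming from the substructure $(\cP,\{0\},\cP)$. The only difference is cosmetic---you are more explicit about the iteration in Claim~\ref{claim2} and the bookkeeping of the growing value group, which the paper leaves implicit.
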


We note that as a consequence of Corollary~\ref{cor:stembed}, the monomial group $\mfM$ is itself purely stably embedded as an ordered $\Lambda$-vector space.

\section{Definable sets}\label{sec:d-min}
In this section and the next, we will establish some consequences of our quantifier elimination result. For this, it will be more convenient to work in a one-sorted language in which we still have quantifier elimination. Let $\Ls$ extend $\cL$ by a unary function symbol $s$. We interpret a model $\cR = (\cR,\cO,\mfM) \models \TM$ as an $\Ls$ model by putting 
\[
s(a) = \fm \in \mfM\ :\Longleftrightarrow\ v(a) = v(\fm)
\]
for $a \in \cR^\times$ and by setting $s(0)\coloneqq 0$. Note that 
\[
\mfM = \{y \in \cR^>: s(y) = y\}\text{ and }\cO=\{y \in \cR: s(y)\leq s(1)\}
\]
are both $\Ls$-definable.

\begin{corollary}\label{cor:1sortedqe}
Suppose $T$ has quantifier elimination and a universal axiomatization. Then $\TM$ has quantifier elimination in the language $\Ls$.
\end{corollary}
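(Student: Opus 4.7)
The plan is to reduce to the three-sorted quantifier elimination of Theorem~\ref{thm:qe} via the standard embedding test \cite[Corollary B.11.9]{ADH17}. Fix models $\cR, \cS \models \TM$, viewed as $\Ls$-structures, with $\cS$ being $|\cR|^+$-saturated, and a common $\Ls$-substructure $\cA$; I will extend the inclusion $\cA \hookrightarrow \cS$ to an $\Ls$-embedding $\cR \to \cS$. Since $T$ is universally axiomatized, $\cA$ is automatically a model of $T$, and since $\Ls$ contains the function $s$, the valuation ring $\cO_\cA := \{a \in \cA : s(a) \leq s(1)\} = \cA \cap \cO$ and monomial group $\mfM_\cA := \{a \in \cA^> : s(a) = a\} = \cA \cap \mfM$ are quantifier-free $\Ls(\emptyset)$-definable. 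This is the key payoff over Corollary~\ref{cor:mc}: an $\Ls$-substructure carries enough data that we need not separately assume $\cA \models \TM$.

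First I would promote $\cA$ to a common $\Lgks$-substructure by setting $\Gamma_\cA := v(\cA^\times)$ and $\bk_\cA := \pi(\cO_\cA)$. The group $\Gamma_\cA$ is an ordered $\Lambda$-vector subspace of both $\Gamma_\cR$ and $\Gamma_\cS$, using that the power functions $x \mapsto x^\lambda$ are $\cL(\emptyset)$-definable in the power bounded case, so $a^\lambda \in \cA$ whenever $a \in \cA^\times$ and $\lambda \in \Lambda$. The set $\bk_\cA$ is an $\cL$-substructure of both $\bk_\cR$ and $\bk_\cS$, since $\cO_\cA$ is a $T$-convex subring of $\cA \models T$. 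The crucial closure property is under the three-sorted section $s^{\Lgks}\colon \Gamma \to \cR^>$: for $\gamma = v(a) \in \Gamma_\cA$ with $a \in \cA^\times$, the $\Ls$-closure of $\cA$ yields $s^{\Lgks}(\gamma) = s^{\Ls}(a) \in \cA$. Closure under the residue and valuation maps is then immediate from the definitions of $\bk_\cA$ and $\Gamma_\cA$.

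Second, I would invoke Theorem~\ref{thm:qe} to extend the inclusion to an $\Lgks$-embedding $(\cR, \Gamma_\cR, \bk_\cR) \to (\cS, \Gamma_\cS, \bk_\cS)$, whose restriction $\phi\colon \cR \to \cS$ to the main sort is an $\cL$-embedding extending $\cA \hookrightarrow \cS$. Preservation of $s^{\Ls}$ is then formal: for $a \in \cR^\times$,
\[
\phi(s^{\Ls}(a))\ =\ \phi(s^{\Lgks}(v(a)))\ =\ s^{\Lgks}(v(\phi(a)))\ =\ s^{\Ls}(\phi(a)),
\]
while $\phi(s^{\Ls}(0)) = 0 = s^{\Ls}(\phi(0))$ is trivial. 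The only step I expect to require real care is the verification that the induced $\cL$-structure on $\bk_\cA$ agrees with the $\cL$-structure it inherits as a subset of $\bk_\cR$ (and of $\bk_\cS$); this is a standard matter given the van den Dries--Lewenberg description of induced residue-field structures \cite{DL95}, but is worth making explicit.
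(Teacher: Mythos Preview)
Your proposal is correct and follows essentially the same route as the paper: promote the common $\Ls$-substructure $\cA$ to the $\Lgks$-substructure $(\cA,v(\cA^\times),\pi(\cA))$, apply Theorem~\ref{thm:qe}, and restrict the resulting embedding to the main sort. Your write-up is more explicit than the paper's (which simply says ``as in the proof of Corollary~\ref{cor:mc}'') in checking closure under the section and preservation of $s$, and your flagged concern about the induced $\cL$-structure on $\bk_\cA$ is a reasonable caution but not a gap---it is handled by the standing convention (see the preliminaries and \cite[Remark~2.16]{DL95}) that the residue field of a $T$-convex subring carries a canonical $\cL$-structure functorial in $\LO$-embeddings.
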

\begin{proof}
Let $\cR$ and $\cS$ be models of $\TM$, and assume that $\cS$ is $|\cR|^+$-saturated. Let $\cA$ be a common $\Ls$-substructure of $\cR$ and $\cS$. As in the proof of Corollary~\ref{cor:mc}, we augment by additional sorts for the value group and residue field to get that $(\cA,v(\cA^\times),\pi(\cA))$ is a common $\Lgks$-substructure of $(\cR,\Gamma_\cR,\bk_{\cR})$ and $(\cS,\Gamma_\cS,\bk_{\cS})$, where the sections from the value group sort to the field sort are defined using the map $s$. Theorem~\ref{thm:qe} gives an $\Lgks$-embedding $(\cR,\Gamma_\cR,\bk_{\cR})\to (\cS,\Gamma_\cS,\bk_{\cS})$ over $(\cA,v(\cA^\times),\pi(\cA))$, which restricts to an $\Ls$-embedding $\cR\to \cS$ over $\cA$.
\end{proof}

For the rest of this section, let $\cR\models \TM$ and let $A \subseteq \cR$ be a set of parameters.

\begin{lemma}\label{lem:termprep}
Let $\tau$ be a unary $\Ls(A)$-term. Then there is $m \in \N$, an $(m+1)$-ary $\cL(A)$-definable function $f\colon \cR^{m+1}\to \cR$, and an $\Ls(A)$-definable set $B \subseteq \cR^{m+1}$ such that 
\begin{enumerate}
\item $B_x$ (the fiber of $B$ over $x$) is open for each $x \in \cR^m$, and $B_x \cap B_y = \emptyset$ for $x \neq y \in \cR^m$.
\item $\cR \setminus \pi^*(B)$ is a finite union of $\Ls(A)$-definable discrete sets, where $\pi^*(B) = \bigcup_{x \in R^m}B_x$. 
\item For each $x \in \cR^m$, we have $\tau(t) = f(x,t)$ for all $t \in B_x$.
\end{enumerate}
\end{lemma}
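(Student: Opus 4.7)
The plan is to proceed by induction on the structure of the $\Ls(A)$-term $\tau$. In the base case, where $\tau$ is itself an $\cL(A)$-term, I take $m = 0$, $B = \cR$, and $f = \tau$; then $\cR \setminus \pi^*(B)$ is empty and $\tau = f$ on $B$.

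For the induction, I distinguish two cases based on the outermost symbol of $\tau$. If $\tau = \sigma(\tau_1, \ldots, \tau_k)$ for an $\cL$-function symbol $\sigma$ and simpler $\Ls(A)$-terms $\tau_i$, I apply the inductive hypothesis to each $\tau_i$ to get preparations $(m_i, f_i, B^i)$, then combine via $m = \sum_i m_i$, $B_{(x_1, \ldots, x_k)} = \bigcap_i B^i_{x_i}$, and $f(x_1, \ldots, x_k, t) = \sigma(f_1(x_1, t), \ldots, f_k(x_k, t))$. Openness and disjointness are preserved under intersections, and a direct computation using the disjointness of the $B^i_{x_i}$ gives $\cR \setminus \pi^*(B) = \bigcup_i (\cR \setminus \pi^*(B^i))$, which is still a finite union of $\Ls(A)$-definable discrete sets.

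The interesting case is $\tau = s(\rho)$ for a simpler $\Ls(A)$-term $\rho$. By the inductive hypothesis, $\rho(t) = f(x, t)$ on $B_x$ for some $\cL(A)$-definable $f \colon \cR^{m+1} \to \cR$. The heart of the argument is preparing $s \circ f$, which I accomplish via the parametric valuation preparation theorem for $\cL$-definable functions in power-bounded $T$-convex valued fields (Tyne~\cite{Ty03}, building on van den Dries~\cite{vdD97}): $\cR^{m+1}$ admits an $\cL(A)$-definable partition into finitely many open cells $C_1, \ldots, C_N$ plus a lower-dimensional boundary, together with $\cL(A)$-definable centers $c_k \colon \cR^m \to \cR$, exponents $\lambda_k \in \Lambda$, and units $u_k$ satisfying $v(u_k(x, t)) = 0$ on $C_k$, such that $f(x, t) = u_k(x, t)(t - c_k(x))^{\lambda_k}$ on $C_k$. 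Because $s$ is multiplicative, sends valuation units to $1$, and commutes with $\lambda$-th powers by $T$-compatibility of $\mfM$, this gives $s(f(x, t)) = s(t - c_k(x))^{\lambda_k}$ on $C_k$. I then refine by introducing a new parameter $y \in \cR$, absorb the finite cell index $k$ into the parameter space, and set
\[
B'_{(x, y, k)} \ = \ B_x \cap (C_k)_x \cap \{t : s(t - c_k(x)) = y\}, \qquad f'(x, y, k, t) \ = \ y^{\lambda_k}.
\]
Each fiber is open, since the annulus $\{t : s(t - c_k(x)) = y\} = \{t : v(t - c_k(x)) = v(y)\}$ is open in the order topology, and fibers over distinct $(x, y, k)$ are pairwise disjoint.

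The main obstacle will be verifying condition (2): the bad points left out of $\pi^*(B')$ must form a finite union of $\Ls(A)$-definable discrete sets. These come from cell boundaries and centers, each given by an $\cL(A)$-definable function $b \colon \cR^m \to \cR$, evaluated at the unique $x$ with the point in $B_x$. For each such $b$, the set $D = \{t \in \pi^*(B) : t = b(x) \text{ for the unique } x \text{ with } t \in B_x\}$ is $\Ls(A)$-definable and discrete: for $t_0 \in D \cap B_{x_0}$, openness of $B_{x_0}$ forces $t$ near $t_0$ to stay in $B_{x_0}$, so the unique corresponding tuple remains $x_0$, and the equation $t = b(x_0)$ pins $t = t_0$. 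Together with the inductively discrete complement $\cR \setminus \pi^*(B)$, this completes the argument.
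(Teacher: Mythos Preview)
Your inductive scaffolding matches the paper's, but the key case $\tau = s(\rho)$ is handled quite differently, and your version has a genuine gap.

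The preparation you invoke is misstated. Take $f(x,t)=x$: on no $\cL(A)$-definable open cell can you write $x = u_k(x,t)(t-c_k(x))^{\lambda_k}$ with $v(u_k(x,t))=0$, since for $\lambda_k=0$ this forces $v(x)=0$ (not an $\cL$-condition), and for $\lambda_k\neq 0$ the right-hand side has valuation varying with $t$ while the left does not. Any correct preparation carries a leading coefficient $a_k(x)$, so that $f(x,t)=a_k(x)\,u_k(x,t)\,(t-c_k(x))^{\lambda_k}$ with $u_k$ a unit; then $s(f(x,t)) = s(a_k(x))\cdot s(t-c_k(x))^{\lambda_k}$ and you must also absorb $s(a_k(x))$ into the parameter tuple. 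With that fix (and a check that a preparation of this strength is really available for arbitrary power-bounded $T$, not just $T_{\an}$), your argument can be completed.

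The paper sidesteps all of this. After inductively writing $\rho(t)=g(x,t)$ on $C_x$ and throwing out the finitely many discontinuities of $t\mapsto g(x,t)$, it simply adds \emph{one} parameter $y$ and sets
\[
B_{x,y}\ =\ \{t\in C^*_x : s(g(x,t))=y\}\quad\text{for }y\in\mfM,\qquad f(x,y,t)=y.
\]
Openness is immediate because each fiber $s^{-1}(y)=\{z:v(z)=v(y)\}$ is open in the order topology and $g(x,\cdot)$ is continuous on $C^*_x$; no preparation, centers, or exponents are needed. So while your route (once repaired) gives more explicit control over $s\circ f$ in terms of a single monomial coordinate $s(t-c_k(x))$, the paper's argument is both shorter and avoids reliance on an external preparation theorem whose exact form in this generality you would need to pin down.
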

\begin{proof}
We proceed by induction on the complexity of terms. If $\tau$ is a variable or a constant symbol, then we take $m = 0$, $f(t) = \tau(t)$, and $B = \cR$ (here $\cR^0$ is the one-point space).

Suppose that the lemma holds for all terms of lower complexity than $\tau$. We first consider the case that $\tau = \sigma(\tau_1,\ldots,\tau_n)$ for $\Ls$-terms $\tau_1,\ldots,\tau_n$ and an $\cL$-term $\sigma$. For each $i =1,\ldots,n$, take $m_i \in \N$, an $\cL(A)$-definable function $f_i\colon \cR^{m_i+1}\to \cR$, and an $\Ls(A)$-definable set $B_i \subseteq \cR^{m_i+1}$ satisfying the conditions in the lemma for $\tau_i$. Let $m \coloneqq m_1+\ldots+m_n$ and define $B \subseteq \cR^{m+1}$ and $f\colon \cR^{m+1}\to \cR$ as follows: for $x = (x_1,\ldots,x_n) \in \cR^{m_1} \times\cdots\times \cR^{m_n}$, put
\[
B_x\ \coloneqq\ B_{x_1}\cap \cdots \cap B_{x_n},\qquad f(x,t) = \sigma\big(f_1(x_1,t),\ldots,f_n(x_n,t)\big).
\]
Then $m$, $B$, and $f$ satisfy the conditions in the lemma for $\tau$.

Finally, suppose that $\tau = s(\sigma)$ for some $\Ls$-term $\sigma$. Take $m \in \N$ and an $\cL(A)$-definable function $g\colon \cR^{m+1}\to \cR$, and an $\Ls(A)$-definable set $C \subseteq \cR^{m+1}$ satisfying the conditions of the lemma for $\sigma$. As $g$ is $\cL(A)$-definable, there are $\cL(A)$-definable functions $g_1,\ldots,g_k:\cR^m\to \cR$ such that $t \mapsto g(x,t)\colon \cR\to \cR$ is continuous on $\cR \setminus \{g_1(x),\ldots,g_k(x)\}$ for all $x \in \cR^m$. We let $C^*\coloneqq C \setminus \bigcup_{i = 1}^k \Gr(g_i)$. Note that then $t \mapsto g(x,t)$ is continuous on the open set $C^*_x = C_x \setminus \{g_1(x),\ldots g_k(x)\}$ for each $x$. Now we define $B \subseteq \cR^{m+2}$ as follows: for $x \in \cR^m$ and $y \in \cR$, set
\[
B_{x,y} \coloneqq \left\{
\begin{array}{ll}
\big\{t \in C^*_x: s(g(x,t)) = y\big\} & \mbox{if $y \in \mfM$}\\
 \intr\!\big(\{t \in C^*_x :g(x,t) = 0\}\big) & \mbox{if $y = 0$}\\
\emptyset & \mbox{otherwise}.
\end{array}\right.
\]
Then each $B_{x,y}$ is open, since $s^{-1}(y)$ is open for $y \in \mfM$ and $t\mapsto g(x,t)$ is continuous on $C^*_x$ for each $x$. Clearly, the sets $B_{x,y}$ are pairwise disjoint. Let $f\colon \cR^{m+2}\to \cR$ be given by $f(x,y,t) = y$. Then for $(x,y) \in \cR^{m+1}$ and $t \in B_{x,y}$, we have
\[
\tau(t)\ =\ s(\sigma(t))\ =\ s(g(x,t))\ =\ y\ =\ f(x,y,t).
\]
It remains to show that $\cR \setminus \bigcup_{x,y}B_{x,y}$ is a finite union of $\Ls(A)$-definable discrete sets. By assumption, $\cR \setminus \bigcup_xC_x$ is a finite union of $\Ls(A)$-definable discrete sets, so it suffices to show that the $\Ls(A)$-definable set $\bigcup_x \big(C_x \setminus \bigcup_yB_{x,y}\big)$ is discrete. Since each $C_x$ is open, it is enough to show that $C_x \setminus \bigcup_yB_{x,y}$ is finite for each $x$, and this holds since $C_x \setminus \bigcup_yB_{x,y}$ is contained in union of $\{g_1(x),\ldots,g_k(x)\}$ and the boundary of the set $\{t \in \cR:g(x,t) = 0\}$.
\end{proof}

Using this lemma, we can describe the subsets of $\cR$.
\begin{theorem}\label{thm:dmin}
Every $\Ls(A)$-definable subset of $\cR$ is the union of an $\Ls(A)$-definable open set and finitely many $\Ls(A)$-definable discrete sets.
\end{theorem}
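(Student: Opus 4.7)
The plan combines the $\Ls$-quantifier elimination of Corollary~\ref{cor:1sortedqe} with the term preparation provided by Lemma~\ref{lem:termprep} and o-minimal cell decomposition for the $\cL$-reduct.

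Let $X \subseteq \cR$ be $\Ls(A)$-definable. By Corollary~\ref{cor:1sortedqe}, $X$ is defined by a quantifier-free $\Ls(A)$-formula, which we may write as $\psi(\tau_1(t),\ldots,\tau_n(t))$, where the $\tau_i$ are $\Ls(A)$-terms and $\psi(y_1,\ldots,y_n)$ is a quantifier-free $\cL(A)$-formula (the only symbol of $\Ls$ not in $\cL$ is the function $s$, so every atomic relation symbol comes from $\cL$). Apply Lemma~\ref{lem:termprep} to each $\tau_i$ to obtain $m_i \in \N$, an $\cL(A)$-definable $f_i\colon \cR^{m_i+1}\to \cR$, and an $\Ls(A)$-definable $B_i \subseteq \cR^{m_i+1}$ as in the lemma. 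Put $m \coloneqq m_1+\cdots+m_n$ and, for $x=(x_1,\ldots,x_n)\in \cR^m$, set $B_x \coloneqq \bigcap_i B_{i,x_i}$. The resulting $B \subseteq \cR^{m+1}$ has pairwise disjoint open fibers, and $\cR \setminus \pi^*(B) = \bigcup_i (\cR \setminus \pi^*(B_i))$ is a finite union of $\Ls(A)$-definable discrete sets.

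Let $U \coloneqq \intr(X)$, an $\Ls(A)$-definable open set; it remains to realize $F \coloneqq X \setminus U$ as a finite union of $\Ls(A)$-definable discrete sets. The part $F \setminus \pi^*(B)$ is the intersection of $F$ with the finite union of discrete sets $\cR \setminus \pi^*(B)$ and is therefore already in the required form. For the remainder $F \cap \pi^*(B)$, observe that on each fiber $B_x$ we have $\tau_i(t) = f_i(x_i,t)$, so with $\tilde Y_x \coloneqq \{t \in \cR : \psi(f_1(x_1,t),\ldots,f_n(x_n,t))\}$ (an $\cL(A\cup\{x\})$-definable subset of $\cR$) we get $X \cap B_x = \tilde Y_x \cap B_x$. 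Openness of $B_x$ then yields $F \cap B_x = (\tilde Y_x \setminus \intr(\tilde Y_x)) \cap B_x$. By o-minimal cell decomposition applied uniformly to the $\cL(A)$-definable family $\{\tilde Y_x\}_{x \in \cR^m}$, there exist $\cL(A)$-definable partial functions $h_1,\ldots,h_p\colon \cR^m \to \cR$ with $\tilde Y_x \setminus \intr(\tilde Y_x) \subseteq \{h_1(x),\ldots,h_p(x)\}$ for every $x$.

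For each $j$, define the $\Ls(A)$-definable set $D_j \coloneqq \{t \in \cR : \exists x \in \cR^m\,(h_j(x) = t \wedge t \in B_x)\}$. The crux is that each $D_j$ is discrete: given $t \in D_j$, the disjointness of the $B_x$'s singles out a unique $x$ with $t \in B_x$; since $B_x$ is open, any $t' \in D_j$ sufficiently close to $t$ also lies in $B_x$, so its witness parameter must also be $x$, forcing $t' = h_j(x) = t$. Consequently $F \cap \pi^*(B) = \bigcup_j (D_j \cap F)$, a finite union of $\Ls(A)$-definable discrete sets; combining with the treatment of $F \setminus \pi^*(B)$ finishes the proof. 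The main difficulty is not any single step but the coordinated use of all three ingredients: Lemma~\ref{lem:termprep} supplies the open-and-disjoint fiber structure that makes each $D_j$ discrete, quantifier elimination reduces $X$ to a single $\cL$-condition on each fiber, and o-minimal cell decomposition caps, uniformly in $x$, the exceptional points of $\tilde Y_x$.
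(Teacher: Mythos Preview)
Your proof is correct and follows essentially the same strategy as the paper's: reduce via $\Ls$-quantifier elimination, apply Lemma~\ref{lem:termprep} to the terms, and exploit that on each open fiber $B_x$ the defining condition becomes $\cL$-definable, so o-minimality controls the non-interior part. The only difference is cosmetic: the paper observes directly that $F\cap B_x$ is finite (an $\cL$-definable set in one variable with empty interior) and hence $F\cap\pi^*(B)$ is a \emph{single} discrete set, whereas you invoke uniform cell decomposition to name the boundary points via $h_1,\ldots,h_p$ and then argue discreteness of each $D_j$; both routes work and yours is a mild over-elaboration of the same idea.
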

\begin{proof}
Let $D \subseteq \cR$ be $\Ls(A)$-definable. By removing the interior of $D$ (which is open and $\Ls(A)$-definable), we may assume that $D$ has empty interior. We will show that $D$ is a finite union of $\Ls(A)$-definable discrete sets. By quantifier elimination, we may assume that $D$ is of the form
\[
D\ =\ \{ t \in \cR: \tau_0(t) = 0,\tau_1(t)<0,\ldots,\tau_n(t)<0\}
\]
for $\Ls$-terms $\tau_0,\ldots,\tau_n$. For each $i \leq n$, take $m_i \in \N$, an $\cL(A)$-definable function $f_i\colon \cR^{m_i+1}\to \cR$, and an $\Ls(A)$-definable set $B_i \subseteq \cR^{m_i+1}$ as in Lemma~\ref{lem:termprep}. Let $m \coloneqq m_0+\ldots+m_n$ and for $x = (x_0,\ldots,x_n) \in \cR^{m_0} \times\cdots\times \cR^{m_n}$, set $B_x \coloneqq B_{x_0}\cap \cdots \cap B_{x_n}$. Then each $B_x$ is open, $\cR \setminus \bigcup_{x \in \cR^m}B_x$ is a finite union of $\Ls(A)$-definable discrete sets, and for each $x$, we have
\[
D \cap B_x\ =\ \{t \in B_x: f_0(x_0,t) = 0,f_1(x_1,t) < 0,\ldots,f_n(x_n,t)<0\}.
\]
As each $f_i$ is $\cL(A)$-definable and $B_x$ is open, we see that $D\cap B_x$ is finite (otherwise, $D\cap B_x$ has interior). Thus, $D \cap \bigcup_{x \in \cR^m}B_x$ is $\Ls(A)$-definable and discrete. As $\cR \setminus \bigcup_{x \in \cR^m}B_x$ is a finite union of $\Ls(A)$-definable discrete sets, we conclude that $D$ is a finite union of $\Ls(A)$-definable discrete sets. 
\end{proof}

Structures in which all unary definable sets are a union of an open set and finitely many discrete sets are sometimes called \emph{d-minimal}, though d-minimal structures are often additionally assumed to be definably complete. Of course, our structure is not definably complete, as the valuation ring is bounded but has no supremum in $\cR$. In~\cite[Section 9]{Fo11}, Fornasiero gives a more relaxed definition of a d-minimal structure which does not include definable completeness. We do not know whether $\cR$ is d-minimal in this sense.

The first example of a d-minimal structure is the expansion of the real field $\R$ by a predicate for the multiplicative subgroup $2^\Z$~\cite{vdD85}. D-minimality was further developed by Miller and others; see~\cite{Mi05}. The theory $\TM$ is quite similar to the theory of $(\R,2^\Z)$: our proof of Theorem~\ref{thm:dmin} has the same structure as the proof in~\cite{Mi05} that $(\R,2^\Z)$ is d-minimal. Moreover, the proof we give in the next section that $\TM$ is distal follows the proof in~\cite{HN17} that $(\R,2^\Z)$ has a distal theory. It is also worth mentioning here previous work of Scowcroft~\cite{Sc88}, who proved a version of Theorem~\ref{thm:dmin} for the field of $p$-adic numbers with the canonical section of the $p$-adic valuation (which is interdefinable with a predicate for the monomial group~$p^\Z$).

\section{Distality}\label{sec:distal}
Distality is a model-theoretic dividing line introduced by Pierre Simon in~\cite{Si13}, that aims to capture order-like behavior within dependent (or NIP) theories. A theory is \textbf{distal} if for every indiscernible $(a_i)_{i\in I}$ and any parameter set $A$ such that
\begin{enumerate}[(a)]
\item $I = I_1+(c)+I_2$ where $I_1,I_2$ are infinite without endpoints, and
\item $(a_i)_{i \in I_1+I_2}$ is $A$-indiscernible,
\end{enumerate}
then the entire sequence $(a_i)_{i \in I}$ is $A$-indiscernible as well. 
As $\TO$ is weakly o-minimal, it is dp-minimal by~\cite[Theorem 4.1]{DGL11} and hence distal by~\cite[Lemma 2.10]{Si13}.

\begin{theorem}\label{thm:dist}
$\TM$ is distal.
\end{theorem}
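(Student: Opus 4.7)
The plan is to follow the strategy of~\cite{HN17} for $(\R,2^{\Z})$, exploiting the one-sorted quantifier elimination in $\Ls$ (Corollary~\ref{cor:1sortedqe}) together with the distality of each constituent theory controlling $\TM$. Let $(a_i)_{i\in I}$ be an indiscernible sequence with $I = I_1+(c)+I_2$ and $I_1, I_2$ infinite without endpoints, assume $(a_i)_{i\neq c}$ is $A$-indiscernible, and aim to show the full sequence is $A$-indiscernible. By $\Ls$-quantifier elimination, it suffices to check that the quantifier-free $\Ls$-type over $A$ of an increasing finite subtuple depends only on its length. Via the term-preparation Lemma~\ref{lem:termprep}, each unary $\Ls$-term agrees, on the pieces of a finite $\Ls(A)$-definable partition into open cells and discrete sets, with an $\cL(A)$-definable function evaluated at the input together with finitely many monomial parameters obtained from $s$. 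Thus atomic $\Ls$-relations among the $a_i$ reduce, on a cofinite tail, to atomic $\cL$-relations among the $a_i$ enriched by their monomial companions $\fm_i \coloneqq s(a_i)$.

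Each constituent theory involved is distal. The theory $\TO$ is weakly o-minimal, hence dp-minimal by~\cite[Theorem 4.1]{DGL11}, hence distal by~\cite[Lemma 2.10]{Si13}. The value group $\Gamma$, as an ordered $\Lambda$-vector space, is o-minimal and hence distal. The residue field is a model of the o-minimal theory $T$, hence also distal. Viewing $(a_i)_i$ inside the three-sorted $\Lgks$-structure, its projections to the value group (via $v$) and to the residue field (via $\pi$) are indiscernible sequences in distal theories. Corollary~\ref{cor:stembed} gives pure stable embeddedness of both sorts and their mutual orthogonality, and the remark immediately following Corollary~\ref{cor:stembed} gives pure stable embeddedness of the monomial group $\mfM$ as an ordered $\Lambda$-vector space. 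This lets us decompose the quantifier-free $\Lgks$-type of the sequence over $A$ into its projections onto the three sorts; applying distality on each sort to fill in the projections of $a_c$, and then reassembling via orthogonality, yields full $\Lgks$-indiscernibility, hence $\Ls$-indiscernibility of $(a_i)_{i \in I}$ over $A$.

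The main obstacle is the interface between sorts: tracking the monomial parameters $\fm_i=s(a_i)$ produced by Lemma~\ref{lem:termprep} along the sequence. When $v(a_i)$ is eventually constant, the $\fm_i$ are eventually constant and distality of $\TO$ (with these monomials as fixed parameters in $A$) gives the conclusion directly. When $v(a_i)$ strictly varies, the sequence $(\fm_i)_i$ of distinct monomials must be shown to interact with the field-sort tuples only through its valuation; this is precisely what pure stable embeddedness of $\mfM$ guarantees, so that preservation of $\Ls$-indiscernibility reduces to distality in the ordered $\Lambda$-vector space $\Gamma$. Combining these two cases, modulo a careful handling of the finitely many exceptional discrete pieces from Lemma~\ref{lem:termprep} (on which the sequence can visit only finitely often by indiscernibility and the length-one case of Theorem~\ref{thm:dmin}), closes the argument.
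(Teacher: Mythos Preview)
Your proposal attempts a direct verification of distality from the definition, whereas the paper invokes the Hieronymi--Nell criterion~\cite[Theorem 2.1]{HN17}: verify quantifier elimination in $\Ls$ (done), a finiteness condition on $s$ (immediate from the Wilkie inequality), and a single concrete condition about preservation of $s(g(a_i,b_1))=h(a_i,b_2)$ along an indiscernible sequence. The paper discharges this last condition with a short argument: by the Wilkie inequality, any $h(a_i,b_2)\in\mfM$ built from monomial inputs must be a $\Lambda$-power product $a_{i,1}^{\lambda_1}\cdots a_{i,k'}^{\lambda_{k'}}b_2^\lambda$, and since $T$ is distal this identity propagates to all $i$; then distality of $\TO$ transfers $v(g(a_i,b_1))=v(h(a_i,b_2))$ across the gap.

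Your sketch has a genuine gap at the step where you pass from $\Ls$-atomic formulas to ``$\cL$-relations among the $a_i$ enriched by their monomial companions $\fm_i\coloneqq s(a_i)$.'' Lemma~\ref{lem:termprep} is \emph{unary}, so applying it only gives you the monomial parameter for the single displayed variable; but the atomic $\Ls$-formulas you must control involve $s$ applied to arbitrary $\cL$-terms in \emph{several} of the $a_{i_1},\ldots,a_{i_k}$ together with parameters from $A$, e.g.\ $s\big(\tau(a_{i_1},\ldots,a_{i_k},b)\big)$. These monomials need not be determined by the tuple $(s(a_{i_1}),\ldots,s(a_{i_k}))$ at all (think of $s(a_{i_1}-a_{i_2})$ when $v(a_{i_1})=v(a_{i_2})$). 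Your dichotomy ``$v(a_i)$ eventually constant vs.\ strictly varying'' therefore does not cover the situation, and stable embeddedness of $\mfM$ says nothing about how field-sort elements outside $\mfM$ produce new monomials under $s$. This is precisely the content isolated by condition (3) of the Hieronymi--Nell criterion, and the paper's use of the Wilkie inequality to force $h(a_i,b_2)$ to be a monomial power-product is the missing idea.

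A second, smaller gap is the ``reassembling via orthogonality'' step. Corollary~\ref{cor:stembed} gives orthogonality of $\Gamma$ and $\bk$ to each other, not to the field sort; it does not let you recover the full $\Lgks$-type from its three sortwise projections, since the maps $v$, $\pi$, $s$ create genuine cross-sort constraints. The paper never attempts such a decomposition.
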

\begin{proof}
Let $(\cU,\cO_\cU,\mfM_\cU) \models \TM$ be a monster model. As in the previous section, we assume that $T$ has quantifier elimination and a universal axiomatization, and we work in the language $\Ls$, so $s(\cU^\times) = \mfM_\cU$. We will use the Hieronymi-Nell criterion for distality~\cite[Theorem 2.1]{HN17}, applied to our theory $T$ with additional function symbol $s$. We need to verify the following:
\begin{enumerate}
\item The theory $\TM$ has quantifier elimination in the language $\Ls$.
\item For every $\Ls$-substructure $\cR \subseteq \cU$ and every $c \in \cU^m$, there is a tuple $d \in s(\cR\langle c\rangle)^n$ for some $n$ such that 
$s(\cR\langle c\rangle) \subseteq \langle s(\cR),d \rangle$.
\item Suppose that $k'\leq k$ and $g,h$ are $\cL$-terms of arities $k+m$ and $k'+n$ respectively, $b_1\in \cU^m$, and $b_2\in\mfM_{\cU}^n$. If $(a_i)_{i\in I}$ is an $\Ls(\emptyset)$-indiscernible sequence from $\mfM_{\cU}^{k'}\times \cU^{k- k'}$ such that
\begin{enumerate}
\item $I = I_1+(c)+I_2$, where $I_1$ and $I_2$ are infinite without endpoints, and $(a_i)_{i\in I_1+I_2}$ is $\Ls(b_1b_2)$-indiscernible, and
\item $s(g(a_i,b_1)) = h(a_i,b_2)$ for every $i\in I_1+I_2$, 
\end{enumerate}
then $s(g(a_c,b_1)) = h(a_c,b_2)$.
\end{enumerate}
We have already verified (1) in Corollary~\ref{cor:1sortedqe} above. For (2), let $\cR$ be an $\Ls$-substructure of $\cU$ and let $c \in \cU^m$. Then $s(\cR\langle c\rangle)$ is a finitely generated multiplicative $\Lambda$-vector space over $s(\cR)$ by the Wilkie inequality. Take generators $\fm_1,\ldots,\fm_n \in s(\cR\langle c\rangle)$. Then
\[
s(\cR\langle c\rangle) \subseteq \langle s(\cR),\fm_1,\ldots,\fm_n \rangle.
\]
Finally, let $f,g,(a_i),b_1,b_2$ be as in (3), so $s(g(a_i,b_1)) = h(a_i,b_2)$ for every $i\in I_1+I_2$. We may as well assume that $g(a_i,b_1)$ and $h(a_i,b_2)$ are nonzero for these $i$ (otherwise, $s(g(a_c,b_1)) = h(a_c,b_2) = 0$ as well, since $T$ is distal).
We first claim that $h(a_i,b_2) \in \mfM_\cU$ for all $i\in I$. Fix $i \in I_1+I_2$, so $h(a_i,b_2)= h(a_{i,1},\ldots,a_{i,k'},b_2) \in \mfM_\cU$. Let $\cR$ be the $\cL$-substructure of $\cU$ generated by $(a_{i,1},\ldots,a_{i,k'},b_2)$. Since $(a_{i,1},\ldots,a_{i,k'},b_2) \in \mfM_{\cU}^{k'+n}$, the Wilkie inequality tells us that $s(\cR)$ is the multiplicative $\Lambda$-vector space generated by $(a_{i,1},\ldots,a_{i,k'},b_2)$, so in particular
\begin{equation}\label{eq:hequal}
h(a_i,b_2)\ =\ a_{i,1}^{\lambda_1}\cdots a_{i,k'}^{\lambda_{k'}}b_2^\lambda
\end{equation}
for some $\lambda_1,\ldots,\lambda_{k'} \in \Lambda$ and some tuple $\lambda \in \Lambda^n$. Since $T$ is distal, the equality (\ref{eq:hequal}) holds for all $i \in I$. Thus $h(a_c,b_2)$ is a product of $\Lambda$-powers of elements in $\mfM_{\cU}$, so $h(a_c,b_2) \in \mfM_{\cU}$. Therefore, in order to show that $s(g(a_c,b_1)) = h(a_c,b_2)$, it is enough to show that $v(g(a_c,b_1))= v(h(a_c,b_2))$. This holds since $v(g(a_i,b_1)) =v( h(a_i,b_2))$ for all $i \in I_1+I_2$ and since $\TO$ is distal.
\end{proof}

\begin{proposition}
$\TM$ is dependent (has NIP). However, $\TM$ is not strongly dependent.
\end{proposition}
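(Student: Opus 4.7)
The NIP assertion is immediate: by Theorem~\ref{thm:dist}, $\TM$ is distal, and distal theories are NIP.

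For the failure of strong dependence, the plan is to construct an ict-pattern of depth $\omega$ in a sufficiently saturated model $(\cR,\cO,\mfM)\models\TM$. The guiding idea is that, via the section $s$ and the residue map, an element $x \in \cR$ admits a parameter-definable ``Hahn-series-like'' decomposition into countably many residue-field coefficients. Fix an increasing sequence $\gamma_0<\gamma_1<\cdots$ in $\Gamma^>$ with $\fm_k \coloneqq s(\gamma_k)$, together with parameters providing lifts of the required residue-field elements back into $\cO$. The ``$k$-th iterated residue'' of $x$, obtained by recursively subtracting lifts of previously extracted coefficients and dividing by $\fm_k$, becomes a parameter-definable function $\rho_k \colon \cR \to \bk$, and (crucially) the joint values $(\rho_k(x))_{k<\omega}$ may range freely over $\bk^\omega$ (finite approximations of the form $\iota(r_0)\fm_0+\cdots+\iota(r_{n-1})\fm_{n-1}$ exist in $\cR$ for any desired tuple, and saturation then supplies the $\omega$-limit).

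Using the stable embeddedness of $\bk$ (Corollary~\ref{cor:stembed}) and a Ramsey argument, select a mutually indiscernible array $(c_{k,j})_{k,j<\omega}$ of pairwise distinct elements of $\bk$, and set $\phi_k(x,y) \coloneqq (\rho_k(x) = y)$. For each path $\eta \colon \omega \to \omega$, any element $x_\eta$ with $\rho_k(x_\eta) = c_{k,\eta(k)}$ for all $k$ realizes the partial type $\{\phi_k(x, c_{k,\eta(k)})\}_k \cup \{\neg \phi_k(x, c_{k,j}) : k<\omega,\ j \neq \eta(k)\}$; the existence of $x_\eta$ follows from saturation and the aforementioned freedom of iterated residues. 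This yields an ict-pattern of depth $\omega$, so $\TM$ is not strongly dependent.

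The main obstacle is that the iterated residue extraction is not natively first-order definable in $\LM$, since no canonical section $\bk \hookrightarrow \cO$ is provided. This is handled by absorbing the (countably many) lifts needed to define each $\rho_k$ on the relevant witnesses into the base parameters of the ict-pattern. One then verifies, using the quantifier elimination of Theorem~\ref{thm:qe}, that mutual indiscernibility of the $(c_{k,j})$-array and the correctness of $\rho_k$ on the $x_\eta$'s both survive this enlargement of the parameter set.
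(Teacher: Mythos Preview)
Your NIP argument is correct and identical to the paper's. For the failure of strong dependence, however, the paper takes a completely different and much shorter route: it simply observes that for every $\epsilon>0$ the set $\mfM\cap(0,\epsilon)$ is definable, infinite, and discrete, and then invokes \cite[Theorem~2.11]{DG17} to conclude that $\TM$ is not strong.

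Your direct ict-pattern construction has a real gap, and your proposed fix does not close it. The function $\rho_k$ you want is ``subtract lifts of the first $k$ residues, divide by $\fm_k$, take the residue.'' But the lifts you must subtract depend on $\rho_0(x),\ldots,\rho_{k-1}(x)$, which in turn depend on the path $\eta$ through the array. In an ict-pattern the formula $\phi_k(x;b_{k,\eta(k)})$ may use only the row-$k$ parameter $b_{k,\eta(k)}$ together with a fixed base set; it cannot see $\eta(0),\ldots,\eta(k-1)$. You suggest recovering those earlier lifts from $x$ itself, but doing so \emph{definably} would require either a definable section $\bk\hookrightarrow\cO$ (which we do not have) or a definable infinite set $D\subseteq\cO$ on which $\pi$ is injective, over which the existential ``$\exists u_0,\ldots,u_{k-1}\in D$'' could range. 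No such $D$ is produced, and if one quantifies instead over all of $\cO$ the formula becomes vacuous: for instance with $k=1$, taking $u_0=d_{\eta(0)}+(d_{\eta(1)}-d_j)\fm_1\fm_0^{-1}\in\cO$ shows that $\phi_1(x_\eta;d_j)$ holds for \emph{every} $j$, so the negative instances in the pattern fail.

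A variant of your idea does work: rather than varying residue-field coefficients, vary the \emph{monomials} themselves. With $\phi_k(x;y)$ asserting that $y\in\mfM$ and that the $k$-th iterate of $x\mapsto x-s(x)$ applied to $x$ has leading monomial $y$, one obtains a genuine ict-pattern, because the map $s$ is definable and no residue lifts are needed. This is closer in spirit to what underlies the Dolich--Goodrick citation, and it is the presence of the definable section $s$ (not a residue section) that separates $\TM$ from the dp-minimal theory $\TO$.
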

\begin{proof}
All distal theories are dependent. To see that $\TM$ is not strongly dependent, let $(\cU,\cO_\cU,\mfM_\cU)\models \TM$ be sufficiently saturated, and note that for each $\epsilon \in \cU^{>0}$, the set $\mfM_{\cU} \cap (0,\epsilon)$ is definable, discrete, and infinite. By~\cite[Theorem 2.11]{DG17}, $\TM$ is not strong.
\end{proof}

\section{Exponential \texorpdfstring{$T$}{T}}\label{sec:exponential}
In this section, we assume that $T$ defines an exponential function $\exp$.
Let $\cR = (\cR,\cO,\mfM)\models \TM$. Recall our assumption that $\mfM^{\succ}$ is closed under $\exp$. We denote the compositional inverse of $\exp$ by $\log$. 

\begin{lemma}\label{lem:decomp}
The additive group of $\cR$ admits the direct sum decomposition $\cR = \cO \oplus \log(\mfM)$.
\end{lemma}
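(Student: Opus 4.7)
The plan is to establish the decomposition by first proving the auxiliary fact that $\exp$ restricts to a group isomorphism $\cO \to (\cO^\times)^>$, and then reading off both the triviality of $\cO \cap \log(\mfM)$ and the surjectivity of the sum from this, using only that $v|_{\mfM}\colon \mfM \to \Gamma$ is a group isomorphism.

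First, I would show that $\exp(\cO) = (\cO^\times)^>$. Since $\exp$ is $\cL(\emptyset)$-definable and continuous on $\cR$, $T$-convexity of $\cO$ immediately gives $\exp(\cO) \subseteq \cO$; combined with $\exp(-x) = \exp(x)^{-1}$ and closure of $\cO$ under negation, this upgrades to $\exp(\cO) \subseteq (\cO^\times)^>$. For the reverse inclusion, given $u \in (\cO^\times)^>$, I would reduce to the case $u \geq 1$ via $\log(u) = -\log(1/u)$, and then evaluate the continuous $\cL(\emptyset)$-definable function $f(x) \coloneqq \log(\max(x,1))$ at $u$; by $T$-convexity, $\log(u) = f(u) \in \cO$.

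Next, I would deduce both halves of the direct sum decomposition. For triviality of the intersection: if $x = \log(\fm) \in \cO$ for some $\fm \in \mfM$, then $\fm = \exp(x) \in (\cO^\times)^>$ by the first step, so $v(\fm) = 0$; since $v|_{\mfM}$ is an isomorphism, $\fm = 1$ and hence $x = 0$. For the existence of the decomposition: given $a \in \cR$, take the unique $\fm \in \mfM$ with $v(\fm) = v(\exp(a))$; then $\exp(a)/\fm$ is a positive element of $\cO^\times$, so $\log(\exp(a)/\fm) \in \cO$ by the first step, yielding $a = \log(\fm) + \log(\exp(a)/\fm)$. That $\log(\mfM)$ is itself an additive subgroup of $\cR$ is immediate from $\log$ being the inverse of the group isomorphism $\exp\colon (\cR,+) \to (\cR^>,\cdot)$.

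The one real subtlety is the closure of $\cO$ under $\log$ applied to positive units: $\log$ does not extend to a continuous function on all of $\cR$, so $T$-convexity does not apply to $\log$ directly. The trick is to replace $\log$ by the truncation $f(x) = \log(\max(x,1))$, which is continuous on $\cR$ and agrees with $\log$ on $[1,\infty)$; after that the rest is bookkeeping.
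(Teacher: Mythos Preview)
Your proof is correct and follows essentially the same route as the paper: establish $\exp(\cO) = (\cO^\times)^>$ and then read off the additive decomposition from the multiplicative one $\cR^> = (\cO^\times)^> \cdot \mfM$. The only difference is that the paper handles your ``subtlety'' more simply---for $u \geq 1$ in $\cO$ it uses the elementary inequality $0 \leq \log u < u$ together with ordinary convexity of $\cO$, rather than invoking $T$-convexity on the auxiliary function $\log(\max(x,1))$.
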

\begin{proof}
We claim that $\exp(\cO) = (\cO^\times)^>$. For one inclusion, let $a \in \cO$, and note that both $\exp a$ and $(\exp a)\inv = \exp(-a)$ belong to $\cO^>$ by $T$-convexity and $\cL(\emptyset)$-definability of $\exp$, so $a \in (\cO^\times)^>$. For the other inclusion, let $u \in (\cO^\times)^>$. If $u\geq 1$, then $\log u \in \cO$, since $\cO$ is convex and $0\leq \log u <u$. If $u < 1$, then $u\inv > 1$ and so $\log(u\inv) \in \cO$ by the previous case. Therefore $\log u = -\log(u\inv) \in \cO$ as well. The decomposition $\cR = \cO \oplus \log(\mfM)$ follows from our claim and the fact that $\cR^>$ is an internal (multiplicative) direct sum of $(\cO^\times)^>$ and $\mfM$.
\end{proof}

We now follow Camacho's strategy for showing that Hahn fields with a predicate for the subring of purely infinite elements are undecidable~\cite[Section 4.2]{Ca18}. Let $a \in \cR$ and $\fm \in \mfM$. By Lemma~\ref{lem:decomp}, there is a unique $b \in \fm \log(\mfM)$ with $a-b \in \fm\cO$. We define $a|_{\fm}$ to be this element $b$, so $(a,\fm)\mapsto a|_{\fm}$ is an $\LM(\emptyset)$-definable function. We also define
\[
\supp(a)\ \coloneqq\ \{\fm \in \mfM: s(a- a|_{\fm}) =\fm\},
\]
so $\supp(a)$ is an $\LM(a)$-definable subset of $\mfM$.
The element $a|_{\fm}$ functions as a sort of the ``truncation of $a$ at $\fm$,'' and $\supp(a)$ serves as an analog of the support. Indeed, viewing the field of transseries $\T$ as a model of $T_{\an,\exp}$, the element $a|_{\fm}$ is exactly the truncation of an element $a\in \T$ at a transmonomial $\fm$, and the set $\supp(a)$ is exactly the support of $a$. If $\fm$ is an infinitesimal transmonomial in $\T$, then the support of $(1-\fm)^{-1}$ is the set $\{\fm^n:n\in \N\}$. Thus, $\N = \{\log\fn/\log \fm:\fn \in \supp(1-\fm)^{-1}\}$ is definable in $\T$. We will show that something similar holds in our model $\cR$.

\begin{proposition}\label{prop:suppofinv}
Let $\fm \in \mfM$ with $\fm < 1$. Then $\fm^n \in \supp((1-\fm)\inv)$ for all $n \in \N$, and if $\fn \in \supp((1-\fm)\inv)$, then either $\fn = \fm^n$ for some $n$ or $\fn < \fm^n$ for all $n$.
\end{proposition}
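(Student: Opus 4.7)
My plan is to use the geometric series identity $(1-\fm)^{-1} = 1 + \fm + \cdots + \fm^{n-1} + \fm^n (1-\fm)^{-1}$ and to exhibit this truncated partial sum as the canonical decomposition from Lemma~\ref{lem:decomp} that defines $((1-\fm)^{-1})|_{\fn}$, for varying $\fn \in \mfM$. The key auxiliary observation, which I expect to be the main technical point, is that $\mfM^\succ \subseteq \log(\mfM)$: for each $\fp \in \mfM^\succ$, the $T$-compatibility hypothesis yields $\exp(\fp) \in \mfM^\succ$, whence $\fp = \log(\exp \fp) \in \log(\mfM)$. Since $\log$ is a group isomorphism from $(\cR^>,\cdot)$ to $(\cR,+)$ and $\mfM$ is a multiplicative subgroup of $\cR^>$, the set $\log(\mfM)$ is an additive subgroup of $\cR$; consequently any finite sum of elements of $\mfM^\succ$ lies in $\log(\mfM)$.

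For the first statement, I would fix $n \in \N$ and verify both containments in the decomposition. First, $1 + \fm + \cdots + \fm^{n-1} \in \fm^n \log(\mfM)$: after multiplication by $\fm^{-n}$, this reduces to $\sum_{k=1}^{n} \fm^{-k} \in \log(\mfM)$, which holds by the observation above since each $\fm^{-k}$ is a monomial strictly greater than $1$. Second, $\fm^n(1-\fm)^{-1} \in \fm^n \cO$, which holds because $(1-\fm)^{-1}$ is a unit in $\cO$ (as $1-\fm$ has residue $1$). Invoking the uniqueness clause of Lemma~\ref{lem:decomp} identifies $((1-\fm)^{-1})|_{\fm^n} = 1 + \fm + \cdots + \fm^{n-1}$, with residual $\fm^n(1-\fm)^{-1}$ of valuation $v(\fm^n)$, so applying $s$ to the residual yields $\fm^n$, giving $\fm^n \in \supp((1-\fm)^{-1})$. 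The case $n = 0$ corresponds to the empty sum and is immediate.

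For the second statement, I would argue contrapositively: suppose $\fn \in \mfM$ is not equal to any $\fm^k$ and satisfies $\fn > \fm^n$ for some $n$; the goal is to show $\fn \notin \supp((1-\fm)^{-1})$. Let $k$ be the smallest nonnegative integer with $\fn > \fm^k$. Either $k = 0$ (and $\fn > 1$), or $k \geq 1$, in which case the minimality of $k$ combined with $\fn \neq \fm^{k-1}$ forces $\fm^k < \fn < \fm^{k-1}$. The same computation as above then applies: each $\fm^i \fn^{-1}$ for $0 \leq i \leq k-1$ is a monomial strictly greater than $1$ (since $\fn < \fm^{k-1} \leq \fm^i$) and hence lies in $\mfM^\succ \subseteq \log(\mfM)$, so $1 + \fm + \cdots + \fm^{k-1} \in \fn \log(\mfM)$; and $\fm^k(1-\fm)^{-1} \in \fn \cO$ since $v(\fm^k) > v(\fn)$. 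By uniqueness, $((1-\fm)^{-1})|_{\fn} = 1 + \fm + \cdots + \fm^{k-1}$, and applying $s$ to the residual yields $\fm^k \neq \fn$; thus $\fn \notin \supp((1-\fm)^{-1})$.
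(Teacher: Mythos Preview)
Your proof is correct and follows essentially the same approach as the paper: identify the partial geometric sum as the truncation $(1-\fm)^{-1}|_{\fn}$ via the decomposition of Lemma~\ref{lem:decomp}, using the key observation $\mfM^{\succ}\subseteq\log(\mfM)$, and read off the support from the valuation of the remainder. The only organizational difference is that the paper does both statements at once by fixing $\fn$ with $\fm^{n+1}\leq\fn<\fm^n$ and showing $\fn\in\supp((1-\fm)^{-1})$ iff $\fn=\fm^{n+1}$, whereas you treat the two conclusions separately; the underlying computation is identical.
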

\begin{proof}
Since $\fm\in \mfM$ is less than $1$, it belongs to the maximal ideal $\smallo$, so $(1-\fm)$ and its inverse $(1-\fm)\inv$ both belong to $\cO^\times$. We have $(1-\fm)\inv|_1 = 0$, so $1 \in \supp((1-\fm)\inv)$ and if $\fn \in \supp((1-\fm)\inv)$, then $\fn \leq 1$. Let us now fix $n \in \N$ and $\fn \in \mfM$ with $\fm^{n+1} \leq \fn < \fm^n$. We will show that $\fn \in \supp((1-\fm)\inv)$ if and only if $\fn = \fm^{n+1}$. We have
\[
1+ \fm+\cdots+\fm^n \ =\ \fn(\fn\inv+ \fn\inv\fm+\cdots+\fn\inv\fm^n).
\]
Since $\fn\inv,\ldots, \fn\inv\fm^n$ are all in $\mfM^\succ \subseteq \log(\mfM)$, their sum is in $\log(\mfM)$ as well, so $1+ \fm+\cdots+\fm^n$ belongs to $\fn\log(\mfM)$. We have
\[
\frac{1}{1-\fm}- 1- \fm-\cdots-\fm^n \ =\ \frac{\fm^{n+1}}{1-\fm}\ \in\ \fn\cO,
\]
so $(1-\fm)\inv|_\fn = 1+ \fm+\cdots+\fm^n$. Since $s((1-\fm)\inv-(1-\fm)\inv|_\fn) = \fm^{n+1}$, we conclude that $\fn \in \supp((1-\fm)\inv)$ if and only if $\fn = \fm^{n+1}$.
\end{proof}

\begin{corollary}\label{cor:Nset}
There is a definable set $A\subseteq \cR$ with $\N \subseteq A$ such that if $a\in A\setminus \N$, then $a>\N$. Consequently, $\N$ is externally definable in any model of $\TM$, and if $T$ has an archimedean model, then $\N$ is definable in some model of $\TM$.
\end{corollary}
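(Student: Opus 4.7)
The plan is to use Proposition~\ref{prop:suppofinv} to construct a single $\LM$-definable set $A \subseteq \cR$ whose interpretation consists of $\N$ together with elements strictly above every natural number; external definability of $\N$ then follows from a compactness argument, while internal definability in the archimedean case amounts to exhibiting one model in which the ``above $\N$'' part is empty. Concretely, since $\cO \subsetneq \cR$ forces $\Gamma$ to be nontrivial, there exists $\fm \in \mfM$ with $0 < \fm < 1$, and I would set
\[
A\ :=\ \bigl\{\, \log(\fn)/\log(\fm) \,:\, \fn \in \supp\bigl((1-\fm)^{-1}\bigr) \,\bigr\},
\]
which is $\LM(\fm)$-definable. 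By Proposition~\ref{prop:suppofinv}, each $\fn \in \supp((1-\fm)^{-1})$ is either $\fm^n$ for some $n \in \N$, giving $\log(\fn)/\log(\fm) = n$, or else satisfies $\fn < \fm^k$ for every $k \in \N$; in the latter case, monotonicity of $\log$ yields $\log \fn < k \log \fm$, and dividing by the negative quantity $\log \fm$ flips the inequality to $\log(\fn)/\log(\fm) > k$ for every $k$. Hence $\N \subseteq A$ and every element of $A \setminus \N$ lies strictly above $\N$, which is the first assertion.

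For external definability, I would pass to a sufficiently saturated elementary extension $\cR^*$ of $\cR$ and realize the partial type
\[
p(x)\ =\ \{\, x > n : n \in \N \,\} \cup \{\, x < a : a \in A \setminus \N \,\},
\]
where $a$ ranges over the elements of the interpretation of $A$ in $\cR$ lying above $\N$. This type is finitely satisfiable in $\cR$ itself, since any finite collection of such $a$'s lies strictly above any finite collection of naturals and $\cR$ is densely ordered. A realization $M \in \cR^*$ then yields $\N = \{x \in \cR : \cR^* \models x \in A \wedge x \leq M\}$, witnessing external definability.

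For the archimedean clause, the goal is to exhibit a single $\cR \models \TM$ in which $\supp((1-\fm)^{-1}) = \{\fm^n : n \in \N\}$ holds exactly, so that $A = \N$ is already internally $\LM$-definable. Over an archimedean model of $T$, one would construct a transseries-style Hahn model of $\TM$ in the spirit of~\cite{DMM97} and~\cite[Section~4]{Ca18}, where elements have well-ordered supports and the geometric series $(1-\fm)^{-1}$ has precisely the expected support. I expect the main obstacle here to be carrying out this Hahn-field construction uniformly from the abstract hypothesis ``$T$ has an archimedean model,'' rather than from specific features of $T_{\anexp}$; the standard exponential-Hahn-field machinery, starting from a suitable archimedean residue field and closing under $\exp$ and $\log$ while respecting the $\cL$-structure, should adapt to produce the required model.
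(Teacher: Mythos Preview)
Your construction of $A$ and your argument for external definability are essentially the paper's proof: the paper defines $A$ by the equivalent condition $\exp(a\log\fm)\in\supp((1-\fm)^{-1})$, deduces the same dichotomy from Proposition~\ref{prop:suppofinv}, and observes that $\N$ is then the intersection of $A$ with a convex set (your saturation argument just makes this last step explicit).

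The archimedean clause, however, has a genuine gap. You propose building a transseries-style Hahn model of $\TM$ in which $\supp((1-\fm)^{-1})$ is \emph{exactly} $\{\fm^n:n\in\N\}$, and you acknowledge that carrying this out from the bare hypothesis ``$T$ has an archimedean model'' is an obstacle. It is: for an arbitrary o-minimal $T$ defining an exponential there is no general theorem producing a Hahn-type model with the required well-ordered-support calculus, so this route is not available without substantial additional work.

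The paper sidesteps this entirely. If $T$ has an archimedean model $\cR_0$, take any proper elementary $T$-extension $\cS$; the convex hull of $\cR_0$ in $\cS$ is a $T$-convex subring, and since $\cR_0$ is archimedean this convex hull is exactly $\cO_\cS=\{a\in\cS:|a|<n\text{ for some }n\in\N\}$. Expand $(\cS,\cO_\cS)$ to a model of $\TM$ using Lemma~1.5. In this model, the set $A$ you already built satisfies $A\cap\cO_\cS=\N$, so $\N$ is outright definable. No control over $\supp((1-\fm)^{-1})$ beyond what Proposition~\ref{prop:suppofinv} already gives is needed; the archimedean hypothesis is used only to make the ``above $\N$'' part of $A$ land outside the (definable) valuation ring.
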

\begin{proof}
Fix $\fm \in \mfM$ with $\fm<1$ and let $A$ be the definable set 
\[
A\ \coloneqq\ \big\{a \in \cR: \exp(a\log \fm) \in \supp\big((1-\fm)\inv\big)\big\}.
\]
Then $a \in A$ if and only if $a \in \N$ or $a > \N$, by Proposition~\ref{prop:suppofinv}. It follows that $\N$ is externally definable in $\cR$ since it is the intersection of $A$ with a convex subset of $\cR$. Suppose now that $T$ has an archimedean model. Then there is a model of $\cS\models \TM$ where $\cO_{\cS} = \{a\in \cS: |a|<n\text{ for some }n \in \N\}$. Defining $A$ in this model as above, we have $\N = A\cap \cO_{\cS}$.
\end{proof}

The definability of such a set $A$ as above precludes the possibility of any ``tame'' model-theoretic behavior such as distality, dependence, or even NTP$_2$. By taking parameters from the initial segment $\N\subseteq A$, we can transfer model-theoretic combinatorial properties from $\N$ to $\cR$. As an illustration, we will show that the theory of $\cR$ has the antichain tree property (ATP), as described in~\cite{AKL23}. A theory has ATP if there is a formula $\phi(x,y)$ and a tuple of parameters $(a_\eta)_{\eta \in 2^{<\omega}}$ such that $\{\phi(x,a_\eta): \eta \in I\}$ is consistent if and only $I \subseteq 2^{<\omega}$ is an antichain. Among theories with the strict order property (SOP), the antichain tree property implies all the other ``non-tame'' combinatorial properties studied in model theory so far, such as TP$_2$~\cite{AK20} (of course, the theory of $\cR$ has SOP as well).

\begin{corollary}
Any completion of $\TM$ has ATP.
\end{corollary}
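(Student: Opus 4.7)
Plan. The strategy is to transfer ATP from the structure $(\N, 0, 1, +, \cdot, <)$ to $\TM$ using the $\LM$-definable set $A$ of Corollary~\ref{cor:Nset}. Since $\N$ sits inside $A$ as an initial segment and the ring operations of $\cR$ extend those of $\N$, a sufficiently \emph{bounded} ATP witness on $\N$ should lift to an ATP witness on $\cR$ once one requires the outer variable to lie in $A$.

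Concretely, I would fix a completion $T^*$ of $\TM$ and a saturated $\cR\models T^*$, and then proceed in two steps. First I would pick an ATP witness in $(\N,+,\cdot,<)$: an $\cL$-formula $\psi(x;y)$ and a tree $(a_\eta)_{\eta\in 2^{<\omega}}\subseteq \N$ such that $\{\psi(x;a_\eta):\eta\in I\}$ is satisfiable in $\N$ iff $I$ is an antichain. I would arrange $\psi$ to have the form $(x<t(y))\wedge\chi(x,y)$ with $t$ an $\cL$-term and $\chi$ a $\Delta_0$-formula, so that the outer witness $x$ and every inner existential witness in $\chi$ are bounded by explicit $\cL$-terms in $y$. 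Such a $\psi$ is obtained by standard prime-power coding of finite binary trees into $\N$, with $a_\eta$ encoding the length and bit pattern of $\eta$ and incompatibility expressed by divisibility plus bounded bit-comparison. Second, I would define $\phi(x;y)\in \LM$ by relativizing every bounded quantifier in $\psi$ to $A$ (replacing $\exists z<s$ by $\exists z\in A,\ z<s$, and similarly for $\forall$) and conjoining $x\in A$.

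Next I would check that $\phi$ with parameters $(a_\eta)$ witnesses ATP in $T^*$. For an antichain $I\subseteq 2^{<\omega}$, an $\N$-witness $x$ to $\{\psi(x;a_\eta)\}_{\eta\in I}$ also witnesses $\{\phi(x;a_\eta)\}_{\eta\in I}$ in $\cR$, because $\N\subseteq A$ and the inner existential witnesses likewise lie in $\N\subseteq A$. Conversely, for any $I$ and any $\cR$-witness $x$ to $\{\phi(x;a_\eta)\}_{\eta\in I}$, the conjuncts $x\in A$ and $x<t(a_\eta)$ together with $a_\eta\in \N$ and the fact that $\N$ is an initial segment of $A$ force $x\in \N$; the same bounding argument puts every inner $A$-quantifier witness into $\N$, so $\{\psi(x;a_\eta)\}_{\eta\in I}$ is $\N$-satisfiable, whence $I$ is an antichain.

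The hard part will be the first step: securing a bounded ATP witness in arithmetic. I expect this to follow from the standard codings of finite binary trees in bounded arithmetic (ATP of PA is by now a folklore consequence of the coding of set theory within PA), but a direct hand construction via prime-product encoding — with the incompatibility of $\eta$ and $\eta'$ expressed by a $\Delta_0$ relation on $a_\eta, a_{\eta'}$ — should suffice if a ready-made citation is not available. Once a bounded $\psi$ is in hand, the relativization to $A$ and the initial-segment property of $\N\subseteq A$ do all the remaining work.
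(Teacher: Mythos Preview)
Your plan is correct in outline and rests on exactly the same mechanism the paper uses: the definable set $A$ from Corollary~\ref{cor:Nset} has $\N$ as an initial segment, so any witness that is bounded by a parameter in $\N$ is forced into $\N$. The difference is one of directness. You propose to first secure a bounded-quantifier ATP witness in $(\N,+,\cdot,<)$ and then relativize to $A$; the paper skips that detour entirely. It writes down a single very simple $\LM$-formula---$\phi(x;y)$ says $x\in A\setminus\{1\}$ and $x$ divides $y$ inside $A$ (i.e.\ $\exists z\in A\,(xz=y)$)---and then, for each finite level $2^{<n}$, chooses parameters $a_\eta\in\N$ by enumerating the antichains $I_1,\dots,I_m\subseteq 2^{<n}$, taking distinct primes $p_1,\dots,p_m$, and setting $a_\eta=\prod_{j:\,\eta\in I_j}p_j$. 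Consistency along an antichain $I_j$ is witnessed by $p_j$; conversely any witness $b$ satisfies $b\in A$ and $b\leq a_\eta\in\N$, hence $b\in\N$, and a prime factor of $b$ picks out an antichain containing $I$. Saturation then gives the full tree $2^{<\omega}$.

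Your route has the virtue of generality: it is a transfer principle that would move any bounded-quantifier combinatorial property from arithmetic to $\TM$. The paper's route buys simplicity and self-containment---no need to cite or reconstruct a $\Delta_0$ ATP witness in PA, and no relativization bookkeeping, since the only inner quantifier is a single $\exists z\in A$ whose witness $z=y/x$ is automatically bounded by $y\in\N$. If you carry out your version, the ``hard part'' you flag is not actually hard: the prime-product encoding you sketch \emph{is} the paper's construction, and you could simply present it directly rather than as a lemma about arithmetic followed by a relativization step.
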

\begin{proof}
We argue as in~\cite[Example 4.31]{AKL23}. Assume that $\cR$ is sufficiently saturated, let $A$ be the definable set from Corollary~\ref{cor:Nset}, and let $\phi(x,y)$ be the formula which states that $x \in A\setminus\{1\}$ and that $x\cdot z = y$ for some $z \in A$. We need to find a tuple of parameters $(a_\eta)_{\eta \in 2^{<\omega}}$ such that $\{\phi(x,a_\eta): \eta \in I\}$ is consistent if and only $I \subseteq 2^{<\omega}$ is an antichain. By saturation, it is enough to find for each $n$, a tuple of parameters $(a_\eta)_{\eta \in 2^{<n}}$ such that $\{\phi(x,a_\eta): \eta \in I\}$ is consistent if and only $I \subseteq 2^{<n}$ is an antichain. Fix $n$, let $I_1,\ldots,I_m$ enumerate the antichains in $2^{<n}$, and let $p_1,\ldots,p_m$ enumerate the first $m$ prime numbers. For each $\eta \in 2^{<n}$, let $a_\eta$ be the product of the primes $p_j$ for which $\eta \in I_j$.
Now let $I\subseteq 2^{<n}$ be an arbitrary subset. If $I$ is an antichain, then $I = I_j$ for some $j \in \{1,\ldots,m\}$ and so $\{\phi(x,a_\eta): \eta \in I\}$ is consistent, as witnessed by $p_j$. Conversely, suppose that $\{\phi(x,a_\eta): \eta \in I\}$ is consistent, as witnessed by some $b \in A\setminus\{1\}$. Then $b$ is less than each $a_\eta$, so $b$ belongs to $\N$ since each $a_\eta$ is in $\N$. Take a prime factor $p$ of $b$. Then $p$ is also a prime factor of each $a_\eta$, so $p = p_j$ for some $j\in \{1,\ldots,m\}$. But then $I \subseteq I_j$, so $I$ is an antichain.
\end{proof}

\end{document}